\definecolor{Red}{cmyk}{0,1,1,0}
\definecolor{verde}{cmyk}{1,0,1,0}
\definecolor{loka}{cmyk}{.5,0,1,.5}
\definecolor{azul}{cmyk}{1,1,0,0}
\numberwithin{equation}{section}
\def\cal{\mathcal}
\newcommand{\eqd}{\stackrel{\tiny d}{=}}
\newcommand{\diam}{\mathrm{diam}}
\def\Ed{{\mathbb{E}}}
\def\Pd{{\mathbb{P}}}
\newcommand{\dist}{\mathrm{dist}}
\newcommand{\N}{\mathbb{N}}
\renewcommand{\P}{\mathbb{P}}
\renewcommand{\d}{\delta}
\newcommand{\e}{\varepsilon}
\newcommand{\s}{\sigma}
\newcommand{\be}{\begin{equation}}
\newcommand{\ee}{\end{equation}}
\newtheorem*{theorem*}{Theorem}
\newtheorem{theorem}{Theorem}
\newtheorem{definition}{Definition}
\newtheorem{lemma}{Lemma}
\newtheorem{corollary}{Corollary}
\title{Diameter of P.A. random graphs with edge-step functions}
\author{Caio Alves$^1$ }
\address{$^1$  Institute of Mathematics,
, University of Leipzig -- Augustusplatz 10, 04109 Leipzig
\newline
e-mail: {\itshape \texttt{caio.alves@math.uni-leipzig.de}}}
\author{Rodrigo Ribeiro$^{2}$}
\address{$^2$ PUC Chile, Av. Vicu\~{n}a Mackenna 4860, Macul, La Florida, Regi\'{o}n Metropolitana, Chile.
\newline
e-mail: {\itshape \texttt{rribeiro@impa.br}}}
\author{R{\'e}my Sanchis$^3$}
\address{$^3$Departamento de Matem{\'a}tica, Universidade Federal de Minas Gerais, Av. Ant\^onio
Carlos 6627 C.P. 702 CEP 30123-970 Belo Horizonte-MG, Brazil
\newline
e-mail: {\itshape \texttt{rsanchis@mat.ufmg.br}}}
\date{\today \\
    $^1$ Institute of Mathematics, University of Leipzig\\
   	$^2$ PUC Chile, Pontificia Universidad Cat\'{o}lica de Chile. \\
    $^3$ Departamento de Matem{\'a}tica, Universidade Federal de Minas Gerais.
}
\begin{document}

\begin{abstract} In this work we prove general bounds for the diameter of random graphs generated by a preferential attachment model whose parameter is a function $f:\N\to[0,1]$ that drives the asymptotic proportion between the numbers of vertices and edges. These results are sharp when $f$ is a \textit{regularly varying function at infinity} with strictly negative index of regular variation~$-\gamma$. For this particular class, we prove a characterization for the diameter that depends only on~$-\gamma$. More specifically, we prove that the diameter of such graphs is of order $1/\gamma$ with high probability, although its vertex set order goes to infinity polynomially. Sharp results for the diameter for a wide class of \textit{slowly varying functions} are also obtained.

\vskip.5cm
\noindent
\emph{Keywords}: complex networks; cliques; preferential attachment; concentration bounds; diameter; scale-free; small-world
\newline 
MSC 2010 subject classifications. Primary 05C82; Secondary  60K40, 68R10
\end{abstract}
\maketitle
\section{Introduction}
%!TEX root = ms.tex

P. Erd\H{o}s and A. R\'{e}nyi in their seminal paper~\cite{ER59a} introduced the random graph model that now carries their name in order to solve combinatorial problems. However, the theory of Random Graphs as a whole has proven to be a useful tool for treating concrete problems as well. Any discrete set of entities whose elements interact in a pairwise fashion may be seen as a graph: the vertices represent the entities, and the edges, the possible interactions. This approach is nowadays intuitive and very fruitful. In the scenario where there exists some randomness on the interactions among the entities, random graphs became the natural tool to represent abstract or real phenomena.

From a mathematical/statistical point of view, the Erd\H{o}s-R\'{e}nyi model -- and many others related to it -- is \textit{homogeneous}, in the sense that its vertices are \textit{statistically indistinguishable}. However, the empirical findings of the seminal work of A. B\'{a}rabasi and R. \'{A}lbert \cite{BA99} suggested that many real-world networks are \textit{non-homogeneous}. They observed that such graphs were \textit{scale-free}, i.e., their degree sequence had a power-law distribution. The authors proposed a mechanism -- known as \textit{preferential attachment} -- that could explain the emergence of such highly skewed distributions. Roughly speaking, the idea is that some sort of popularity drives the interaction among the entities. 

Motivated mainly by these empirical findings, nowadays Preferential Attachment models (PA-models for short) constitute a well known class of random graph models investigated from both theoretical and applied perspectives. Recently, the preferential attachment mechanism has been generalized in many ways and combined with other rules of attachment, such as \textit{spatial proximity} \cite{JM15} and \textit{fitness of vertices} \cite{DO14}. It also arises naturally even in models where it is not entirely explicit such as the deletion-duplication models \cite{BM15, T15}, in which vertices' degree still evolve according to the PA-rule. Furthermore, the PA-models provide an interesting and natural environment for other random processes, such as \textit{bootstrap percolation, contact process and random walks}, see \cite{AF18, CD09, JM17} for recent examples of random processes whose random media is sampled from some PA-model.

When dealing with PA-models, there exists a set of natural questions that arises, such as the degree distribution and the order of the diameter. Their interest relies on modeling purposes and on the implications for the graph's combinatorial structures.

In this paper we address the latter topic on a PA-model which is a modification of the Barab\'asi-Albert model introduced in~\cite{bastat} (BA-model for short). The kind of result we pursuit is to show that some graph properties hold \textit{asymptotically almost surely} (a.a.s). Given a sequence of random graphs $\{G_t\}_{t \in \N}$, we say that a graph property $\mathcal{P}$ holds a.a.s, if
\[
\Pd \left( G_t \in \cal{P} \right) = 1-o(1)
\]
i.e., the probability of observing such property increases to~$1$ as~$t$ goes to infinity. For instance,~$\cal{P}$ may be the set of graphs having diameter less than the logarithm of the total number of vertices.

In order to offer a clearer discussion of our results, we introduce the model in the next subsection, then we discuss separately the properties which we want the graph to satisfy a.a.s, as well as the associated motivation.
\subsection{Preferential attachment model with an edge-step function}
The model we investigate here in its generality was proposed in \cite{ARSEdge17} and combines the traditional preferential attachment rule with a function called \textit{edge-step function} that drives the growth rate of the vertex set.

The model has one parameter $f$ which is a real non-negative function with domain given by~$\N$ and bounded by one on the $L_{\infty}$-norm, we will see~$f$ as a sequence of probabilities indexed by a time parameter~$t\in\N$. Without loss of generality and to simplify the expressions we deal with, we start the process from an initial graph $G_1$ consisting in one vertex and one loop. The model evolves inductively and at each step the next graph is obtained by performing one of the two stochastic operations defined below on the previous one:
\begin{itemize}
	\item \textit{Vertex-step} - Add a new vertex $v$ and add an edge $\{u,v\}$ by choosing $u\in G$ with probability proportional to its degree. More formally, conditionally on $G$, the probability of attaching $v$ to $u \in G$ is given by
	\[
	P\left( v \rightarrow u \middle | G\right) = \frac{\text{degree}(u)}{\sum_{w \in G}\text{degree}(w)}.
	\]
	\item \textit{Edge-step} - Add a new edge $\{u_1,u_2\}$ by independently choosing vertices $u_1,u_2\in G$ according to the same rule described in the vertex-step. We note that both loops and parallel edges are allowed.
	
\end{itemize}
The model alternates between the two types of operations according to a sequence $\{Z_t\}_{t\ge 1}$ of independent random variables such that $Z_t\eqd \mathrm{Ber}(f(t))$. We then define inductively a random graph process $\{G_t(f)\}_{t \ge 1}$  as follows: start with~$G_1$. Given $G_{t}(f)$, obtain $G_{t+1}(f)$ by either performing a \textit{vertex-step} on $G_t(f)$ when $Z_t=1$ or performing an \textit{edge-step} on $G_t(f)$ when $Z_t=0$. Notice that $f(t)$ is the probability of adding a new vertex at time $t$, thus the reader may think of $f$ as a \textit{vertex-step probability}.

Given $f$, its partial sum is an important quantity for us and we reserve the letter $F$ to denote it, i.e., $F$ is a function defined as
\begin{equation}
F(t) := 1+ \sum_{s=2}^tf(s).
\end{equation}
Observe that the edge-step function $f$ is intimately related to the growth of the vertex set. If we let $V_t$ denote the number of vertices added up to time $t$, then, if~$F(t)\xrightarrow{t\to\infty}\infty$, then a concentration of measures result will imply
\begin{equation}
V_t =1+ \sum_{s=2}^t Z_s \approx F(t),
\end{equation}
since~$\{Z_s\}_{s\ge 1}$ is a sequence of independent random variables. Thus, abusing the notation for a brief moment, we may write
\[
\frac{\mathrm{d}V_t}{\mathrm{d}t} = f(t).
\]
When the proper machinery has been settled, we will discuss in Section~$\ref{s:finalcomments}$ that some regularity should be imposed on $f$ in order to avoid some pathological behaviors. For now, we define a list of conditions we may impose on $f$ at different points of the paper in order to get the proper results. For instance, we say $f$ satisfies condition $(\mathrm{D})$ if it is \textit{non-increasing}. We define the further conditions:
\begin{equation}\tag{D$_0$}\label{def:conditionD0}
f \text{ is non-increasing and }\lim_{t\to \infty}f(t) =0;
\end{equation}
\begin{equation}\tag{S}\label{def:conditionS}
\sum_{s=1}^{\infty}\frac{f(s)}{s} < \infty;
\end{equation}
\begin{equation}\tag{L$_{\kappa}$}\label{def:conditionL}
\sum_{s=t^{1/13}}^{t}\frac{f(s)}{s} < (\log t)^{\kappa}, \text{ for all }t\in\N \text{ and some } \kappa \in (0,1);
\end{equation}
\begin{equation}\tag{RV$_{\gamma}$}\label{def:conditionRV}
\exists \gamma, \text{ such that }\forall a>0,\; \lim_{t \to \infty} \frac{f(at)}{f(t)} = \frac{1}{a^{\gamma}}.
\end{equation}
We must point out that for modeling purposes, conditions (D) and (\ref{def:conditionD0}) may be desirable. For instance, in the context of social networks, these conditions assure that the rate at which new individuals join the network is decreasing as the size of the network increases. Whereas, conditions (\ref{def:conditionS}) and (\ref{def:conditionL}) are related to the order of the maximum degree of $G_t(f)$. In \cite{ARSEdge17}, the authors point out that the maximum degree at time $t$ should be of order
\begin{equation}
t\cdot \exp \left \lbrace -\frac{1}{2}\sum_{s=2}^t\frac{f(s)}{s-1}\right \rbrace.
\end{equation}

A function satisfying condition (\ref{def:conditionRV}) is called \textit{regularly varying} at infinity and the exponent $\gamma$ is called the \textit{index of regular variation}. Functions in this class are well-studied in mathematics in many contexts and a variety of asymptotic results for them and their integrals is known due mainly to the theory developed by Karamata, see \cite{BGT89Book} for a complete reference. 

In general, we may say that this paper investigates how sensitive is the diameter to changes of $f$ and aims at a general characterization of such observable for a class of functions~$f$ that is as wide as possible. 

\subsection{Shaping the diameter} An important property of graphs which is also related to spread of rumors and connectivity of networks is the diameter, that is, the maximal graph distance between two vertices of said graph. Originally, investigating the diameter of real-world networks, the authors in \cite{SW98} observed that, although coming from different contexts, those networks usually have diameter of order less than the logarithm of the number of vertices, the so-called \textit{small-world} phenomena.

In this paper we address the issue of determining the order of the diameter of~$G_t(f)$. Our main goals in this subject are to obtain a characterization for the diameter imposing conditions on $f$ as weak as possible and also to obtain regimes for the diameter arbitrarily small but still preserving the scalefreeness of the graph.

In order to slow the growth of the diameter of PA-models, two observable play important roles: the maximum degree and the proportion of vertices with low degree. The former tends to concentrate connections on vertices with very high degree which acts in the way of shortening the diameter, since they attract connections to them; whereas the latter acts in the opposite way. In \cite{HHZ07} and \cite{FR07}, the authors have shown that in the configuration model with power-law distribution the diameter order is extremely sensitive to the proportion of vertices with degree 1 and 2.

One way to reduce the effect of low degree vertices on the diameter is via \textit{affine} preferential attachment rules, i.e., introducing a parameter $\d$ and choosing vertices with probability proportional to their degree plus $\d$. In symbols, conditionally on $G_t$, we connect a new vertex $v_{t+1}$ to an existing one $u$ with probability
\[
\Pd \left( v_{t+1} \rightarrow u \middle | G_t \right) = \frac{\text{degree}(u) + \d}{\sum_{w \in G_t}(\text{degree}(w) + \d)}.
\]
By taking a negative $\d$, the above rule increases the influence of high degree vertices and indeed decreases drastically the diameter's order. For instance, for positive $\d$ the diameter of $G_t$ is at least $\log(t)$, whereas for $\d <0$ the diameter of $G_t$ is at most $\log\log t$. See~\cite{DHH10} for several results on the diameter of different combinations for the affine preferential attachment rule.

Diminishing the effect of low degree vertices is not enough to break the growth of the diameter completely. The reason for that is, despite their low degree, these vertices exist in large amount. Even the existence of a vertex with degree close to $t$ at time $t$ may not be enough to freeze the diameter's growth. In \cite{MP14} the authors have proven that the maximum degree of a modification of the BA-model is of order $t$ at time $t$. However, the authors believe that this is not enough to obtain a diameter of order $\log \log t$, the reason being that this large hub still has to compete with a large number of low degree vertices.
\subsubsection{General bounds for the diameter} As said before, our goal is to develop bounds for the diameter of $G_t(f)$ with $f$ as general as possible. Under the condition of monoticity, we prove the following lower bound
\begin{theorem}[Lower bound on the diameter]\label{t:lowerbounddiam}  Let $f$ be an edge-step function satisfying condition $(\mathrm{D})$. Then
	\begin{equation}
	\label{e:diamlog}
	\Pd \left( \diam (G_t(f)) \ge \frac{1}{3} \left(\frac{\log t}{\log \log t} \wedge \frac{\log t}{-\log f(t)}\right)\right) = 1-o(1).
	\end{equation}
\end{theorem}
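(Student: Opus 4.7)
Set $k:=\frac{1}{3}\bigl(\log t/\log\log t \wedge \log t/(-\log f(t))\bigr)$. The strategy is to fix a reference vertex $v^{*}\in G_t(f)$ and show, with probability $1-o(1)$, that its $k$-ball $B_{G_t(f)}(v^{*},k)$ does not cover the whole vertex set. If $|B(v^{*},k)|<V_t$, some vertex lies at graph distance strictly greater than $k$ from $v^{*}$, whence $\diam(G_t(f))\ge k$. I take $v^{*}$ to be a vertex added at some time $s^{*}\in[t/2,t]$: such a vertex exists w.h.p.\ under condition $(\mathrm{D})$ and its degree in $G_t(f)$ is $O(1)$, which is crucial to initiate the induction below.

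\textbf{Ball growth via a degree profile.} The elementary recursion $|B(v^{*},r+1)|\le\sum_{u\in B(v^{*},r)}\deg_{G_t(f)}(u)$ reduces the problem to controlling aggregate degrees in the ball. I set a threshold $D:=C(\log t\vee 1/f(t))$ and prove, via martingale concentration applied to the preferential-attachment degree evolution under $(\mathrm{D})$, a tail estimate of the form: with high probability, at most $O(t/D^{2})$ vertices of $G_t(f)$ have degree exceeding $D$. An induction on $r\le k$ then yields
\[
|B(v^{*},r)|\le D^{r}+\text{(hub correction)},
\]
where the hub correction is absorbed by a union bound over the at most $O(t/D^{2})$ high-degree vertices, each contributing at most $t^{1/2+o(1)}$ neighbours. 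Since $D^{r}\ll t/D^{2}$ for $r$ up to $k$, the correction stays negligible compared with the dominant term throughout the induction.

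\textbf{Comparison with $V_t$ and the two regimes.} Using the monotonicity-driven lower bound $V_t\ge cF(t)\ge ctf(t)$ provided by condition $(\mathrm{D})$, one has $\log V_t$ of order $\log t$ in the relevant range. Comparing $D^{k}$ with $V_t$ recovers the two branches of the minimum: when $f(t)\ge 1/\log t$, $\log D\asymp\log\log t$ and one gets $k\asymp\log t/\log\log t$; when $f(t)<1/\log t$, $\log D\asymp -\log f(t)$ and one gets $k\asymp\log t/(-\log f(t))$. The factor $1/3$ provides the slack needed to absorb the implicit constants in the degree tail estimate, in the hub correction, and in the lower bound on $V_t$.

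\textbf{Main obstacle.} The principal difficulty is handling the ``hubs'', i.e.\ vertices of degree up to $t^{1/2+o(1)}$, whose presence in the expanding ball could easily violate the geometric growth $D^{r}$. The remedy is the power-law-type tail bound on the degree distribution, which is itself a consequence of the preferential-attachment dynamics combined with $(\mathrm{D})$; the proof stands or falls on establishing a sufficiently sharp such bound so that the atypical high-degree vertices are few enough to be discarded via a union bound without destroying the inductive step.
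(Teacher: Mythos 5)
Your approach is genuinely different from the paper's, which runs a second-moment argument on the number of \emph{isolated paths} of length $l\approx\frac{1}{3}(\log t/\log\log t\wedge\log t/(-\log f(t)))$: pendant chains of degree-two vertices, each born by a vertex-step and attaching to the previous one, receiving no further connections. Such a path has an endpoint at graph distance $l-1$ from the rest of the graph, which directly certifies $\diam(G_t(f))\ge l-1$. The expected number of such paths is shown to diverge (Lemma 8 / Lemma~\ref{l:numsnakes}), two disjoint paths are shown to occur almost independently (Lemma~\ref{l:isopathcor}), and Paley--Zygmund finishes. Monotonicity of $f$ is used only to lower-bound the probability of vertex-steps inside a time-window near $t$.

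Your ball-growth strategy, by contrast, has a genuine gap at the hub-control step, and I do not think it can be repaired in the form proposed. The inductive claim $|B(v^*,r+1)|\lesssim D\cdot|B(v^*,r)|$ requires that no vertex of $B(v^*,r)$ has degree exceeding $D\asymp\log t\vee 1/f(t)$. But vertices of degree $\ge D$ hold roughly a $1/D$ fraction of the total degree $2t$ (this is the correct scaling when the degree law is power-law with exponent $3$, as in the BA tree; for general $f$ the fraction is even larger), and a uniformly chosen edge-endpoint lands on such a vertex with probability $\Theta(1/D)$. Since $|B(v^*,r)|\asymp D^r$, the expected number of ball vertices adjacent to a high-degree vertex at radius $r$ is already $\asymp D^{r}\cdot(1/D)=D^{r-1}\ge1$ for every $r\ge1$, so the union bound you describe cannot give a vanishing failure probability beyond the very first step. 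Once a single hub of degree $t^{1/2+o(1)}$ enters the ball, the correction term is not $O(t/D^2)$ as you state but $t^{1/2+o(1)}$, which immediately swamps $D^r$ for every relevant $r$. Moreover, the quantitative tail ``$O(t/D^2)$ vertices of degree $\ge D$'' is the exponent-$3$ case and is not a consequence of condition $(\mathrm{D})$ alone; under $(\mathrm{D})$ the degree tail can be considerably heavier, making the hub problem worse. Finally, the choice of $v^*$ born after time $t/2$ already has its first (preferential-attachment) neighbour biased toward hubs, so even $|B(v^*,1)|$ is not guaranteed to be controlled by $D$. To make the ball argument work one would essentially have to repeatedly condition on avoiding all high-degree vertices along the exploration --- which, if pushed through, becomes the isolated-path construction of the actual proof.
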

 Requiring more information on $f$, we prove upper bounds that, for a broad class of functions, are of the same order of the lower bounds given by the previous theorem. This is all summarized in the Theorem below.  
\begin{theorem}[Upper bound on the diameter]\label{t:upperbounddiam} Let $f$ be an edge-step function. Then
	\begin{enumerate}[(a)]
		\item if $f$ also satisfies conditions \eqref{def:conditionS} and \eqref{def:conditionD0} then
		\begin{equation*}
		\Pd \left(   \diam(G_t(f)) \le  2+ 6   \left(     \frac{\log t}{-\log\left(\sum_{s=t^{\frac{1}{13}}}^t \frac{f(s)}{s-1}  \right) }       \wedge    \frac{\log t}{\log\log t}                        \right)              \right) = 1- o(1);
		\end{equation*}
		\item  if $f$ satisfies condition \eqref{def:conditionL} then
		\begin{equation*}
		\Pd \left(   \diam(G_t(f)) \le  2+ \frac{6 }{1-\kappa}    \frac{\log t}{\log\log t} \right) = 1- o(1).
		\end{equation*}
	\end{enumerate}
\end{theorem}
\subsubsection{The class regularly of varying functions}In \cite{ARSEdge17}, the authors prove a characterization of the empirical degree distribution of graphs generated by $f$ satisfying condition~(\ref{def:conditionRV}), for~$\gamma \in [0,1)$. More specifically, they prove that the degree distribution of such graphs obeys a power law distribution whose exponent depends only on the index of regular variation $-\gamma$.

A byproduct of our general bounds is a similar characterization for the diameter. For edge-step functions satisfying conditions \eqref{def:conditionD0} and \eqref{def:conditionRV} for $\gamma \in (0,\infty)$ the graphs generated by such functions have constant diameter and its order depends only on the index of regular variation $-\gamma$. We state this result in the theorem below
\begin{theorem}[Diameter of regularly varying functions]\label{t:rv} Let $f$ be an edge-step function satisfying conditions \eqref{def:conditionD0} and \eqref{def:conditionRV}, for~$\gamma \in (0,\infty)$. Then,
	\[
	\Pd\left( \frac{1}{4\gamma} \le \diam(G_t(f)) \le \frac{100}{\gamma}+2 \right) = 1-o(1).
	\]
\end{theorem}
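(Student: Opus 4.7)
The plan is to derive both bounds directly from Theorems~\ref{t:lowerbounddiam} and~\ref{t:upperbounddiam}(b), by feeding into them the standard Karamata asymptotics for regularly varying functions with strictly positive index. Write $f(t) = t^{-\gamma} L(t)$ with $L$ slowly varying; I will repeatedly use the standard facts that $\log L(t) = o(\log t)$ and $L(t^{a})/L(t)\to 1$ for each fixed $a>0$.

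For the lower bound, condition (\ref{def:conditionD0}) implies condition $(\mathrm{D})$, so Theorem~\ref{t:lowerbounddiam} applies. From
\[
-\log f(t)\;=\;\gamma \log t - \log L(t)\;=\;\gamma\log t\,(1+o(1))
\]
one obtains $\log t /(-\log f(t)) \to 1/\gamma$, while $\log t/\log\log t$ diverges. Hence for all sufficiently large $t$ the minimum appearing inside \eqref{e:diamlog} is at least $3/(4\gamma)$, which after multiplication by $1/3$ gives $\diam(G_t(f))\ge 1/(4\gamma)$ with probability $1-o(1)$.

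For the upper bound, I first verify hypothesis (\ref{def:conditionS}) of Theorem~\ref{t:upperbounddiam}(b): the sequence $s\mapsto f(s)/s$ is regularly varying with index $-(1+\gamma)<-1$, so the corresponding series converges by Karamata's theorem. Next, using the Karamata tail estimate for regularly varying summands with summability index,
\[
\sum_{s=t^{1/13}}^{t}\frac{f(s)}{s-1}\;\sim\;\sum_{s\ge t^{1/13}}\frac{f(s)}{s}\;\sim\;\frac{f(t^{1/13})}{\gamma}\;=\;\frac{t^{-\gamma/13}\,L(t^{1/13})}{\gamma}.
\]
Taking $-\log$ yields $(\gamma/13)\log t\,(1+o(1))$, and therefore
\[
\frac{\log t}{-\log\bigl(\sum_{s=t^{1/13}}^{t}f(s)/(s-1)\bigr)}\;\longrightarrow\;\frac{13}{\gamma}.
\]
Since this limit is finite while $\log t/\log\log t$ diverges, for $t$ large the minimum appearing in Theorem~\ref{t:upperbounddiam}(b) equals $(13/\gamma)(1+o(1))$, and the bound there becomes
\[
\diam(G_t(f))\;\le\;2+6\cdot\tfrac{13}{\gamma}(1+o(1))\;\le\;2+\tfrac{100}{\gamma}
\]
with probability $1-o(1)$, as required.

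The argument is therefore essentially a two-line Karamata calculation in each direction: $-\log f(t)\sim \gamma\log t$ for the lower bound, and the truncated partial sum $\sim t^{-\gamma/13}$ (up to slowly varying factors) for the upper bound. The only point requiring some care is the second estimate, which must be read off from Karamata's theorem for regularly varying \emph{sequences} with index strictly below $-1$ rather than from its more familiar integral form; the generous constants $1/4$ and $100$ in the statement are chosen precisely to absorb the slowly varying correction factors hidden inside $L$.
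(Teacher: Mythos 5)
Your proposal is correct and follows essentially the same route as the paper: both the lower and upper bounds are read off from Theorems~\ref{t:lowerbounddiam} and~\ref{t:upperbounddiam}(b) by evaluating $-\log f(t)\sim\gamma\log t$ and $\sum_{s\geq t^{1/13}}f(s)/s\sim f(t^{1/13})/\gamma$ via the Karamata representation of $f$ and the tail form of Karamata's theorem. If anything, your treatment of the truncated tail sum is slightly more explicit than the paper's, which states the same asymptotic somewhat tersely.
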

%In Section~\ref{s:finalcomments} we discuss the case of slowly varying edge-step functions and which order for the diameter may be achieved considering subclasses of this class of functions. For the functions considered, the results are sharp.
\subsubsection{The class of slowly varying functions} The case when $\gamma = 0$ is richer in terms of possible orders of the diameter and does not admit a nice characterization as the one we obtain for positive~$\gamma$. In this settings, we present another consequence of our bounds for particular subclasses of the class of slowly varying functions. Let us first define the subclass of functions and later state how our results fit these specific classes.
\begin{equation}
L := \left \lbrace  f \text{ is an edge-step function such that }f(t) = \frac{1}{\log^\alpha(t)}, \text{ for some }\alpha > 0\right \rbrace;
\end{equation}
\begin{equation}
E := \left \lbrace f \text{ is an edge-step function such that } f(t) = e^{-\log^{\alpha }(t)}, \text{ for some }\alpha \in (0,1)\right\rbrace.
\end{equation}
It is straightforward to verify that functions belonging to the set above defined are slowly varying. For functions belonging to the two subclasses $L$ and $E$, our results have the following consequences, verifiable through elementary calculus,
\begin{corollary}\label{cor:sv1}Let $f$ be an edge-step function.
	\begin{enumerate}[(a)]
		\item if $f$ belongs to $L$, with $\alpha \le 1$, then
		\begin{equation*}
		\label{eq:logpeq}   
		\Pd \left(\frac{1}{3}     \frac{\log t}{\log \log t} \leq \diam(G_t(f))   \leq \frac{8}{\alpha}     \frac{\log t}{\log \log t}  \right) = 1- o(1);
		\end{equation*}
		\item if $f$ belongs to $L$, with $\alpha > 1$, then
		\begin{equation*}
		\label{eq:exloggran}
		\Pd \left(\frac{1}{3\alpha}\frac{\log t}{\log \log t}  \leq  \diam(G_t(f))   \leq \frac{7}{\alpha-1} \frac{\log t}{\log \log t} \right) = 1-o(1);
		\end{equation*}
		\item if $f$ belongs to $E$, then
		\begin{equation*}
		\label{eq:expeq}   
		\Pd \left(C_{\alpha}^{-1}(\log t)^{1-\alpha} \leq \diam(G_t(f))   \leq C_{\alpha}(\log t)^{1-\alpha}  \right) = 1- o(1),
		\end{equation*}
		for some $C_{\alpha} \ge 1$.
	\end{enumerate}
\end{corollary}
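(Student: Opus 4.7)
The plan is to derive all three items as direct consequences of Theorems \ref{t:lowerbounddiam} and \ref{t:upperbounddiam} by matching the two abstract quantities $-\log f(t)$ and $\sum_{s=t^{1/13}}^{t} f(s)/s$ against the explicit forms of $f$ in $L$ and $E$. Monotonicity of $f$ (and hence condition (D)) is immediate from the definitions, so the lower bound in Theorem \ref{t:lowerbounddiam} applies throughout; what remains is to decide, for each case, which of $\frac{\log t}{\log\log t}$ or $\frac{\log t}{-\log f(t)}$ is the binding minimum, and whether condition (\ref{def:conditionS}) or (\ref{def:conditionL}) is available for the upper bound.

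For $f(t) = 1/\log^{\alpha}(t) \in L$ one has $-\log f(t) = \alpha\log\log t$, so $\frac{\log t}{-\log f(t)} = \frac{1}{\alpha}\frac{\log t}{\log\log t}$. The lower bounds in (a) and (b) follow at once from Theorem \ref{t:lowerbounddiam}, taking the minimum with $\frac{\log t}{\log\log t}$ (active when $\alpha\le 1$, giving the bound in (a)) or without (when $\alpha>1$, giving the bound in (b)). For the upper bounds I would separate on $\alpha$. When $\alpha\le 1$, condition (\ref{def:conditionS}) fails but an elementary integral comparison gives $\sum_{s=t^{1/13}}^{t} f(s)/s = O((\log t)^{1-\alpha})$, so $f$ satisfies (\ref{def:conditionL}) with any $\kappa$ slightly above $1-\alpha$; Theorem \ref{t:upperbounddiam}(c) then yields the stated bound $\frac{8}{\alpha}\frac{\log t}{\log\log t}$ after absorbing constants. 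When $\alpha>1$, condition (\ref{def:conditionS}) holds, and $\sum_{s=t^{1/13}}^{t} f(s)/(s-1)$ has leading order $(\log t)^{1-\alpha}$ (by the primitive $\log^{1-\alpha}(s)/(1-\alpha)$), so $-\log$ of that sum is $(\alpha-1)\log\log t + O(1)$; plugging into Theorem \ref{t:upperbounddiam}(b) and taking the minimum with $\frac{\log t}{\log\log t}$ gives the $\frac{7}{\alpha-1}\frac{\log t}{\log\log t}$ bound in (b).

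For $f(t) = e^{-\log^{\alpha}(t)}\in E$ with $\alpha\in(0,1)$, one has $-\log f(t) = (\log t)^{\alpha}$, so $\frac{\log t}{-\log f(t)} = (\log t)^{1-\alpha}$, which is dominated by $\frac{\log t}{\log\log t}$; the lower bound from Theorem \ref{t:lowerbounddiam} then reads $\frac{1}{3}(\log t)^{1-\alpha}$. For the upper bound I would verify condition (\ref{def:conditionS}) by the substitution $u=\log s$, which turns the relevant sum into a comparison with $\int e^{-u^{\alpha}}du$, a convergent integral. The same substitution, together with the standard tail asymptotic $\int_{v}^{\infty} e^{-u^{\alpha}}du \sim \alpha^{-1}v^{1-\alpha} e^{-v^{\alpha}}$, gives
\begin{equation*}
\sum_{s=t^{1/13}}^{t}\frac{f(s)}{s-1} \;\asymp\; \frac{(\log t)^{1-\alpha}}{\alpha}\,\exp\!\bigl(-13^{-\alpha}(\log t)^{\alpha}\bigr),
\end{equation*}
so $-\log$ of this sum is $(1+o(1))\,13^{-\alpha}(\log t)^{\alpha}$; the minimum in Theorem \ref{t:upperbounddiam}(b) is again $(\log t)^{1-\alpha}$, producing an upper bound of the form $C_{\alpha}(\log t)^{1-\alpha}$ as claimed.

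No step presents a genuine obstacle — once the generic theorems are in hand, everything reduces to elementary asymptotic analysis. The only point that deserves care is the $E$-class sum, where one must use the Laplace-type tail estimate for $\int e^{-u^{\alpha}}du$ rather than a naive bound; otherwise one recovers the right exponent but misses the factor $13^{-\alpha}$ that is absorbed into $C_{\alpha}$. The slight discrepancy between the $\frac{6}{1-\kappa}$ and $\frac{8}{\alpha}$ constants in (a), and between $\frac{6}{\alpha-1}$ and $\frac{7}{\alpha-1}$ in (b), is handled by taking $\kappa$ arbitrarily close to $1-\alpha$ from above (respectively by absorbing the $O(1)$ correction in $\log$-sums into the leading constant for large $t$).
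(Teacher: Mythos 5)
Your plan is correct and is precisely the intended route: the paper does not give an explicit proof of Corollary~\ref{cor:sv1}, merely saying it is ``verifiable through elementary calculus'' from Theorems~\ref{t:lowerbounddiam} and~\ref{t:upperbounddiam}, and your computations of $-\log f(t)$, of $\sum_{s\ge t^{1/13}} f(s)/s$, and of the binding term in the minimum are exactly those elementary verifications. One small point worth spelling out in part~(b): for $1<\alpha<2$ the binding term in Theorem~\ref{t:upperbounddiam}(b) is $\frac{\log t}{\log\log t}$ (not $\frac{1}{\alpha-1}\frac{\log t}{\log\log t}$), so the actual bound coming out of the theorem is $2+6\frac{\log t}{\log\log t}$, which is still dominated by the stated $\frac{7}{\alpha-1}\frac{\log t}{\log\log t}$ since $\frac{7}{\alpha-1}>7$ in that range; your phrase ``taking the minimum'' covers this but the reader may want the case split made explicit.
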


\subsection{Main technical ideas} In order to prove the existence of some given subgraph in the (affine) BA-random graphs a key ingredient is usually to use the fact that two given vertices~$v_i$ and~$v_j$ -- born at times~$i,j\in\N$ respectively -- may be connected only at one specific time-step, since (assuming~$i<j$) the model's dynamic only allows $v_j$ to connect to $v_i$ at the moment in which~$v_j$ is created. This property facilitates the computation of the probability of the occurrence of a given subgraph and decreases the combinatorial complexity of the arguments. In \cite{bollrior, eggemann} the authors estimate the number of triangles and cherries (paths of length~$3$) of the (affine) BA-model and their argument relies heavily on this feature of the model. In our case, however, the \textit{edge-step} prevents an application of such arguments, since a specific subgraph may appear at any time after the vertices have been added. 

Another difficulty in our setup is the degree of generality we work with. Our case replaces the parameter $p \in (0,1]$ in the models investigated in \cite{ARS17, CLBook, CF03} by any non-negative real function~$f$ with $\| f \|_{\infty} \le 1$. The introduction of such function naturally increases the complexity of any analytical argument one may expect to rely on, as will be clear during our estimates for the vertices' degree, and makes it harder to discover \textit{threshold} phenomena. This is the reason why in our work the Karamata's Theory of \textit{regularly varying functions} is crucial in order to prove sharper results.

In order to prove Theorem~\ref{t:lowerbounddiam}, we apply the second moment method on the number of \textit{isolated paths}. This approach demands correlation estimations for the existence of two such paths in~$G_t(f)$, which we do under the assumption of $f$ being monotonic only.

For the proof of Theorem \ref{t:upperbounddiam}, a key step is to obtain a lower bound for the maximum degree, since high degree vertices tend to attract vertices to themselves. However, as said above, the degree of generality of $f$ makes all analytical arguments more involved. Then, in order to overcome part of the heavy computation we have to deal with in our setup, in Lemma \ref{l:thestar} we obtain lower bounds for the maximum degree by constructing a monotonic coupling with the case in which $f \equiv p \in (0,1)$. This coupling allow us to transpose known results about the maximum degree of $G_t(p)$ to the general case of $G_t(f)$.

We then combine Lemma \ref{l:thestar} with a lower bound for the degree of earlier vertices, which is obtained by estimation of negative moments of a given vertex's degree, in order to show that, under conditions~\eqref{def:conditionS} or~\eqref{def:conditionL}, long paths of younger vertices are unlikely and older vertices are all very close in graph distance. Finally, using results from the Karamata's theory of regularly varying functions, we verify that this broad class of functions satisfies our assumptions, proving Theorem~\ref{t:rv}.

\subsection{Organization} 
In Section~\ref{s:tecest}, we prove technical estimates for the degree of a given vertex, which are needed for the upper bound on the diameter. Section~\ref{s:lower} is devoted to the general lower bound for the diameter, i.e., for the proof of Theorem~\ref{t:lowerbounddiam}. We prove the upper bound for the diameter, Theorem~\ref{t:upperbounddiam}, in Section~\ref{s:upper}. Finally, in Section~\ref{s:rv} we show what our results say for the class of regularly varying functions. We end the paper at Section~\ref{s:finalcomments} with some comments on the affine version of our model and a brief discussion on what may happen to the model if some regularity conditions are dropped.

\subsection{Notation}\label{ss:notation}
We let~$V(G_t(f))$ and~$E(G_t(f))$ denote the set of \textit{vertices} and \textit{edges} of~$G_t(f)$, respectively. Given a vertex~$v\in V(G_t(f))$, we will denote by~$D_t(v)$ its \textit{degree} in~$G_t(f)$. We will also denote by~$\Delta D_t(v)$ the \emph{increment} of the discrete function~$D_t(v)$ between times~$t$ and~$t+1$, that is,
\begin{equation*}
\Delta D_t(v) =D_{t+1}(v) -D_t(v).
\end{equation*}
When necessary in the context, we may use~$D_G(v)$ to denote the degree of~$v$ in the graph~$G$.

Given two sets~$A,B\subseteq V(G_t(f))$, we let~$\{A\leftrightarrow B\}$ denote the event where there exists an edge connecting a vertex from~$A$ to a vertex from~$B$. We denote the complement of this event by~$\{A\nleftrightarrow B\}$. We let~$\dist(A,B)$ denote the graph distance between~$A$ and~$B$, i.e. the minimum number of edges that a path that connects~$A$ to~$B$ must have. When one of these subsets consists of a single vertex, i.e. $A=\{v\}$, we drop the brackets from the definition and use~$\{v\leftrightarrow B\}$ and~$\dist(v,B)$, respectively.

Regarding constants, we let $C,C_1,C_2,\dots$ be positive real numbers that do not depend on~$t$ whose values may vary in different parts of the paper. The dependence on other parameters will be highlighted throughout the text.

Since our model is inductive, we use the notation $\mathcal{F}_t$ to denote the $\s$-algebra generated by all the random choices made up to time $t$. In this way we obtain the natural filtration $\mathcal{F}_0 \subset \mathcal{F}_1 \subset \dots $ associated to the process.

\section{Technical estimates for the degree}\label{s:tecest}
%! TEX root= Diameter-Edge_step.tex

In this section we develop technical estimates related to the degree of a given vertex. We begin by stating one of the most fundamental identities in the study of preferential attachment models: the conditional distribution of the increment of the degree of a given vertex. Given~$v\in V(G_t(f))$, we have
\begin{equation}
\label{eq:degvcond}
\begin{split}
\P\left(  \Delta D_t(v)  =0 \middle |\mathcal{F}_t   \right) &= f(t+1)\left( 1-\frac{ D_t(v)}{2t}    \right) +(1-f(t+1))\left( 1-\frac{ D_t(v)}{2t}    \right)^2
\\
\P\left(  \Delta D_t(v) =1 \middle |\mathcal{F}_t   \right) &= f(t+1)\frac{ D_t(v)}{2t}+2(1-f(t+1))\frac{ D_t(v)}{2t}\left( 1-\frac{ D_t(v)}{2t}    \right),
\\
\P\left(  \Delta D_t(v)=2 \middle |\mathcal{F}_t   \right) &= (1-f(t+1))\frac{ D_t(v)^2}{4t^2}.
\end{split}
\end{equation}
To see why the above identities hold true,  observe for example that in order for $\Delta D_t(v)=0$, either a vertex step was taken, and the vertex did not connect to~$v$, or an edge step was taken and neither of the endpoints of the new edge connected to~$v$. The other equations follow from analogous reasonings. As a direct consequence, we obtain
\begin{equation}
\label{eq:degvconde}
\begin{split}
\Ed \left[\Delta D_t(v)\middle| \mathcal{F}_t\right] 
& =  1\cdot f(t+1)\cdot\frac{D_t(v)}{2t} + 1\cdot 2(1-f(t+1))\frac{D_t(v)}{2t}\left( 1- \frac{D_t(v)}{2t} \right)  \\&\quad+ 2\cdot (1-f(t+1))\frac{D^2_t(v)}{4t^2}  \\
& = \left( 1-\frac{f(t+1)}{2} \right)\frac{D_t(v)}{t}.
\end{split}
\end{equation}
Using the above equation repeatedly one obtains, conditioned on the event where the vertex~$v$ is born at time~$t_0$,
\begin{equation}
\label{eq:degexpec}
\begin{split}
\Ed \left[D_t(v)\right] &= \Ed \left[\Ed \left[ D_t(v)\middle| \mathcal{F}_{t-1}\right] \right]
\\
&= \left( 1+\frac{1}{t-1}-\frac{f(t)}{2(t-1)} \right)\Ed \left[ D_{t-1}(v) \right]
\\
&=\prod_{s=t_0 }^{t-1}\left( 1+\frac{1}{s}-\frac{f(s+1)}{2s} \right).
\end{split}
\end{equation}

The next lemma gives a lower bound for the maximum degree of $G_t(f)$ when $f$ decreases to zero. This result will be crucial in the proof of Theorem \ref{t:upperbounddiam}, which gives general upper bounds for the diameter. Its proof involves the construction of a coupling between $\{G_t(f)\}_{t\geq 1}$ and~$\{G_t(p)\}_{t\geq 1}$, where~$G_t(p)$ is constructed from the constant function equal to~$p\in(0,1]$ for every~$t\in\N$. This coupling will be monotonic in the sense that, if $G_t(p)$ has a vertex with high degree, then so has ~$G_t(f)$.

\begin{lemma}\label{l:thestar} Let $f$ be an edge-step function such that $f(t) \searrow 0$ as $t$ goes to infinity. Then, for any fixed $\e>0$,
	\[
	\Pd \left( D_{\mathrm{max}}\left( G_t(f) \right) < t^{1-\e}\right) \xrightarrow{t\to\infty} 0.
	\]
\end{lemma}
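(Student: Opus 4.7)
The strategy is to compare the process $G_t(f)$ with the constant-parameter process $G_t(\mathrm{cons}_p)$ for small $p$ via the doubly-labeled tree coupling, and then invoke Theorem~$2$ of \cite{ARS17}, which supplies a polynomial lower bound on the sum of degrees within a fixed block of early vertices of $G_t(\mathrm{cons}_p)$. Given $\e>0$, I would fix $p=p(\e)\in(0,1)$ small enough that $1-p/2>1-\e/2$, choose a large fixed $m\in\N$ and a block index $j=j(m,p)$ just above $m^{2/(1-p)}+1$, and tune the auxiliary parameter $R$ from that theorem so that the polynomial tail term it delivers is at most $t^{-2}$ for all sufficiently large $t$. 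Using $d_{t,m}(j)\le m\,D_{\max}(G_t(\mathrm{cons}_p))$, this yields
\[
\Pd\!\left(D_{\max}(G_t(\mathrm{cons}_p))<t^{1-\e}\right)\le t^{-2}
\]
for every $t$ large enough.

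Next I would transfer this bound to $G_t(f)$ via Proposition~\ref{p:diammaxdegProp}(a): under the grand coupling of Section~\ref{s:randomtree}, the pointwise comparison $f\le \mathrm{cons}_p$ forces $D_{\max}(G_t(f))\ge D_{\max}(G_t(\mathrm{cons}_p))$ almost surely. The hypothesis $f(t)\to 0$ ensures that $f(s)\le p$ holds for every $s$ beyond some fixed threshold $t_0=t_0(f,p)$, leaving only the finite initial segment $[1,t_0]$ on which $f$ might still exceed $p$.

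The main obstacle is reconciling the pointwise monotonicity hypothesis of Proposition~\ref{p:diammaxdegProp} with the purely asymptotic hypothesis on $f$. I would handle it by passing to the auxiliary dominating edge-step function $h$ equal to $1$ on $[1,t_0]$ and equal to $p$ on $(t_0,\infty)$, so that $f\le h$ everywhere and consequently $D_{\max}(G_t(f))\ge D_{\max}(G_t(h))$. Theorem~$2$ of \cite{ARS17} adapts to $h$ by shifting its block indexing to start beyond time $t_0$; because $t_0$ is fixed while $t\to\infty$, this shift only alters the numerical constants and preserves the exponent $1-\e$ on $t$ in the lower bound for $D_{\max}(G_t(h))$. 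Combining with the previous step and absorbing the fixed factor $m$ into the $\e$-budget, we obtain the claimed bound, and the short proof then reduces to executing this bookkeeping carefully.
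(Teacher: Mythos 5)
Your overall route is the paper's route: bound $D_{\textit{max}}(G_t(\mathrm{cons}_p))$ from below polynomially via Theorem~$2$ of~\cite{ARS17}, taking $p$ small so the exponent $(1-\e)(1-p/2)$ (equivalently $\beta$ in the notation of that theorem) can be pushed above any target $1-\e'$, and then transfer this to $G_t(f)$ through the grand coupling and the monotonicity of the maximum degree from Proposition~\ref{p:diammaxdegProp}(a). You are also right that the paper's one-line appeal to Proposition~\ref{p:diammaxdegProp} glosses over a real mismatch: the hypothesis here is only $f(t)\to 0$, which gives $f(s)\le p$ beyond some finite $t_0$, whereas Proposition~\ref{p:diammaxdegProp}(a) requires a pointwise domination on all of $(1,\infty)$.

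The weak link is the last step of your fix. After you pass to the dominating edge-step function $h$ with $h\equiv 1$ on $[1,t_0]$ and $h\equiv p$ on $(t_0,\infty)$, you still need a \emph{lower} bound $D_{\textit{max}}(G_t(h))\ge t^{1-\e}$. Proposition~\ref{p:diammaxdegProp}(a) applied to $\mathrm{cons}_p\le h$ only gives $D_{\textit{max}}(G_t(h))\le D_{\textit{max}}(G_t(\mathrm{cons}_p))$, i.e.\ the wrong direction, so you cannot inherit the $\mathrm{cons}_p$ bound by monotonicity. Your remaining argument is the assertion that Theorem~$2$ of~\cite{ARS17} ``adapts by shifting the blocks past $t_0$''; that is not bookkeeping, it is the claim that the theorem is robust to replacing the one-vertex-one-loop initial condition by a random $t_0$-vertex tree, and it would require re-inspecting the proof in~\cite{ARS17}. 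A cleaner fix that avoids this: sample $G_t(h)$ and $G_t(\mathrm{cons}_p)$ from the same $\mathcal{T}_t$. They make identical collapse decisions for all $s>t_0$; they differ only in that under $h$ none of $v_1,\dots,v_{t_0}$ is collapsed, while under $\mathrm{cons}_p$ some subset of them is. Hence the cluster that realizes $D_{\textit{max}}(G_t(\mathrm{cons}_p))$ can break into at most $t_0+1$ pieces in $G_t(h)$, so some piece carries degree at least $D_{\textit{max}}(G_t(\mathrm{cons}_p))/(t_0+1)$. Since $t_0=t_0(f,p)$ is a constant, the divisor is absorbed by shrinking $\e$ once $t$ is large, and you may then apply Theorem~$2$ of~\cite{ARS17} exactly as stated.
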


Let us describe the general idea behind the proof of the above result. Exclusively in the proof of this Lemma we will denote as~$i\in\N$ the~$i$-th vertex to be added to the graph. We couple the degree sequence of the first $m$ vertices added by the process $\{G_s(f)\}_{s \ge 1}$, for some large constant $m$, to the degree sequence of vertices $\{ t^\varepsilon+1, \dots, t^\varepsilon+m\}$ in $\{G_s(p)\}_{s \ge 1}$ for large $t$ and small $\varepsilon$. The coupling is constructed in such way that the $i$-th vertex of $\{G_s(f)\}_{s \ge 1}$ always has larger degree than the $(t^\varepsilon+i)$-th vertex in $\{G_s(p)\}_{s \ge 1}$. This is possible for two reasons, the first being due to the fact that, since vertices are chosen with probability proportional to their degrees and both graphs have the same number of edges, if vertex $i$ in $G_t(f)$ has larger degree than vertex $t^\varepsilon +i$ in $G_t(p)$, then $i$ is more likely to increase its degree in the next step. Moreover, when using this fact to show stochastic domination by induction, the first step follows because, if at the moment in which process $\{G_s(p)\}_{s \ge 1}$ adds its $(t^\varepsilon +i)$-th vertex process $\{G_s(f)\}_{s \ge 1}$ has already added vertex $i$, then~$i$ has degree at least one. The second reason is the fact that $f$ decreases to zero. This property guarantees that $f(t)$ is always smaller than $p$ after some time, which allows us to couple both process in a way that process $\{G_s(p)\}_{s \ge 1}$ takes less edge-steps than $\{G_s(f)\}_{s \ge 1}$, and therefore vertices in $\{G_s(p)\}_{s \ge 1}$ have less chances to increase their degrees. We finally combine the coupling with previous results about $G_t(p)$ in~\cite{ARS17}, which imply that, \textit{w.h.p}, some vertex $j \in G_t(p)$ added at some time between $t^\varepsilon+1$ and $t^\varepsilon +m$ has degree at least $t^{(1-\varepsilon)(1-p/2)}$. 

Although the coupling above described might be believable, its formalization may be a little involved since we have to deal with times in which the two graph processes perform different graph operations (vertex or edge-step), as well as the case when $\{G_s(f)\}_{s \ge 1}$ has less than $m$ vertices at the time when $\{G_s(p)\}_{s \ge 1}$ adds its $t^\varepsilon$-th vertex.

\begin{proof} The case in which $f$ decreases fast enough to zero so that $\sum_{s=1}^{\infty}f(s) < \infty$ is simpler. By the hypothesis on $f$ and the fact that in this case $|V_t|$ is the sum of independent Bernoulli random variables, it follows that $\{\exp\{|V_t|\}\}_{t\geq 1}$ is limited in $L_1$. Thus, by Markov's inequality
	\[
	\Pd\left( |V_t| > 2\log t \right) \le \frac{\sup_t \Ed \exp \{|V_t|\}}{t^2}.
	\]
	And since the sum of all degrees at time $t$ is $2t$, we have w.h.p that $G_t(f)$ has at most $2\log t$ vertices, and by the pigeonhole principle it follows in this case that there exists a vertex of degree at least~$t/\log t$.
	
	We now consider the case in which the average number of vertices goes to infinity. We begin by applying Theorem~$2$ of \cite{ARS17}, choosing $m, R$ large enough, $p$ small enough, and letting $j = t^{\varepsilon}$ in order to obtain, for large enough $t$, the bound 
	\begin{equation}\label{eq:teorem2}
	\Pd\left( \sum_{i=1}^{m} D_{G_t(p)} (j+i) < t^{1-\varepsilon}\right) \le t^{-C},
	\end{equation}
	for some $C$ depending on $\varepsilon, m, p$ and $R$.
	That is, with probability at least~$1-t^{-C}$, for small~$p$ and large $m$, at least one of the vertices $\{j+1, \dots, j+m \}$ has degree at least $t^{1-\varepsilon}/m$ in~$G_t(p)$.

	 Let $t_0 := \min \{ t \ge 0 \; ; \;  f(t)\le p \}$, which is well-defined since $f$ decreases to zero, and consider~$t$ in the definition of $j$ be larger than $t_0$.
	 We will couple $\{G_t(f)\}_{t \in \N}$ and $\{G_t(p)\}_{t \in \N}$ for all $i \in \{1, \dots, m\}$ in such way that
	 \begin{equation}\label{ineq:degs}
	 D_{G_{s}(p)}(j+i) \le D_{G_{s}(f)}(i), \forall s\ge 0, \text{w.h.p.}
	 \end{equation}
	 In order to formalize the coupling, we assume that $|V(G_{t^\varepsilon}(f))| \ge m$ and that our probability space is large enough so we have at our disposal two independent sequences of r.v.s $\{(U_s^{(1)}, U_s^{(2)})\}_{s \in \N}$ and $\{U'_s\}_{s \in \N}$ such that $U^{(1)}\eqd U_s^{(2)} \eqd U' \eqd \mathrm{Uni}[0,1]$, and that all these variables are mutually independent. We will use the sequence $\{U_s'\}_{s\in \N}$ to control simultaneously the sequence of edge and vertex-steps we take in each graph process. The sequence $\{(U_s^{(1)}, U_s^{(2)})\}_{s \in \N}$ on the other hand will be used to select vertices in both graphs.
	 
	 We proceed by induction. Define the degree of a yet-unborn vertex as~$0$. Then in the event where~$|V(G_{t^\varepsilon}(f))| \ge m$, \eqref{ineq:degs} holds up to time~$s=t^\e$. Suppose we have succeeded in coupling $\{G_r(f)\}_{r \le s}$ and $\{G_r(p)\}_{r \le s}$ in such way that \eqref{ineq:degs} is satisfied for~$s\geq t^\e$. Now we have in the same probability space both graphs $G_s(f)$ and $G_s(p)$ with the property that \eqref{ineq:degs} holds. To advance the induction, we need to know how to generate $G_{s+1}(f)$ and $G_{s+1}(p)$. To do so, let us first introduce some notation. For $i \in \{1, \dots, m\}$, define inductively the random intervals
	 \begin{equation*}
	 I_{s}^{(1)} := \left[0, \frac{D_{G_s(f)}(1)}{2s}\right] =: [b_0(s), b_1(s)], \; 		I_{s}^{(i)}\equiv[b_{i-1}(s),b_{i}(s)] := \left[b_{i-1}(s), b_{i-1}(s)+ \frac{D_{G_s(f)}(i)}{2s}\right],
	 \end{equation*}
	 and for each $I_s^{(i)}$ we let $I_{s,p}^{(j+i)}$ be
	 \begin{equation}
	 I_{s,p}^{(j+i)} := \left[b_{i-1}(s), b_{i-1}(s) + \frac{D_{G_s(p)}(j+i)}{2s} \right].
	 \end{equation}
	 Notice that, since \eqref{ineq:degs} holds, $ I_{s,p}^{(j+i)} \subset  I_{s}^{(i)}$, and that, since the total degree of~$G_t(f)$ is always~$2s$, all the above intervals are contained in~$[0,1]$.
	 
	 Now we need to know which kind of step we take on each graph and we do this according to $U'_{s+1}$ (recall that this variable is independent of the whole past).\\
	 
	 \noindent \underline{ $U'_{s+1} \le f(s+1)$.} In this case we perform a vertex-step in \textit{both graphs}. We use~$U^{(1)}_{s+1}$ in order to select two vertices (one in each graph) to connect the new vertex that was born in each graph.  We increase the degree of vertex $i \in G_s(f)$ (resp. $j+i \in G_s(p)$) if $U^{(1)}_{s+1} \in I_{s}^{(i)}$ (resp.  $U^{(1)}_{s+1} \in I_{s,p}^{(j+i)}$ ). If~$U^{(1)}_{s+1}$ does not belong to any interval $I_{s}^{(i)}$ for $i=1,\dots,m$ (resp. any interval $ I_{s,p}^{(j+i)}$ ), we let~$G_{s+1}(f)$ (resp.~$G_{s+1}(p)$) be constructed in some arbitrary manner that preserves its law. By the construction of the intervals it is clear that \eqref{ineq:degs} is preserved in time $s+1$.

	  \noindent \underline{ $f(s+1) \le U'_{s+1} \le p$.} In this case, we perform an edge-step on $G_{s+1}(f)$ and a vertex-step on $G_{s+1}(p)$. As before, we sample $U^{(1)}_{s+1}$ and if $U^{(1)}_{s+1} \in I_{s,p}^{(j+i)}$ we connect the new vertex of $G_{s+1}(p)$ to $j+i$. As for $G_{s+1}(f)$, we interpret $U^{(1)}_{s+1}$ as indicating the endvertex of one of the halves of the new edge.  Then we sample $U^{(2)}_{s+1}$ and perform a similar procedure as before, but only considering the intervals with respect to $f$, in order to choose the second half of the new edge added to $G_{s+1}(f)$. We again construct~$G_{s+1}(f)$ and~$G_{s+1}(p)$ in some arbitrary manner in the case the respective uniform variables do not fall in any of the associated intervals. 
	   
	   \noindent \underline{ $p \le U'_{s+1}$.} In this case we perform an edge-step in both graphs. To do so, it is enough to repeat of procedure of selecting vertices using both $U_{s+1}^{(1)}$ and $U_{s+1}^{(2)}$.
	   
	   Finally, with Markov's inequality we bound the probability that $|V(G_{t^\varepsilon}(f))| \le m$. Recalling that~$|V(G_{t^\varepsilon}(f))| $ is the sum of independent Bernoulli random variables, we obtain
		\begin{equation}
		\Pd\left( |V(G_{t^\varepsilon}(f))| \le m\right) \le e^m \Ed \left[ \exp\{-|V(G_{t^\varepsilon}(f))|\}\right] \le \exp\{m-(1-e^{-1})F(t^\varepsilon)\},
		\end{equation}
		which completes the proof since~$\e$ was chosen arbitrarily.
\end{proof}
Our objective now is to obtain a polynomial lower bound for the degree of older vertices, which will be important in the proof of the upper bound for the diameter in Theorem~\ref{t:upperbounddiam}. We begin with an upper bound for the expectation of the multiplicative inverse of the degree. Recall the definition of the process~$(Z_t)_{t\geq 1}$, consisting of independent Bernoulli variables that dictate whether a vertex-step or an edge-step is performed at time~$t$.
\begin{lemma}\label{l:invdege} Given any edge-step function~$f$, consider the process $\{G_t(f)\}_{t\ge 1}$. Denote by~$v_i$ the vertex born at time~$i\in\N$. We have
	\begin{equation}
	\label{eq:invdege}
	\Ed\left[    (D_t(v_i))^{-1}Z_i\right] \le f(i) \left(  \frac{t-1}{i}       \right)^{-\frac{1}{6}}.
	\end{equation}
	And consequently
	\begin{equation}
	\label{eq:invdegmarkov}
	\Pd\left( D_t(v_i) \leq \left(\frac{t-1}{i}\right)^{\frac{1}{12}}  \middle | Z_i=1\right)\leq     \left(  \frac{t-1}{i} \right)^{-\frac{1}{12}}.
	\end{equation}
\end{lemma}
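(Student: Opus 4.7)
The plan is to exploit the explicit conditional distribution for $\Delta D_t(v)$ recorded in~\eqref{eq:degvcond} in order to establish a super-multiplicative recursion for $\mathbb{E}[D_t(v_i)^{-1}\mid Z_i=1]$, and then pass to the Markov estimate in~\eqref{eq:invdegmarkov}.

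First, observe that $Z_i\sim \mathrm{Ber}(f(i))$ and that on $\{Z_i=1\}$ the vertex $v_i$ is born with $D_i(v_i)=1$, so
\begin{equation*}
\mathbb{E}\bigl[D_t(v_i)^{-1}Z_i\bigr]=f(i)\,\mathbb{E}\bigl[D_t(v_i)^{-1}\bigm|Z_i=1\bigr],
\end{equation*}
and it remains to bound the conditional expectation by $((t-1)/i)^{-1/6}$. Write $X_s=D_s(v_i)$ and $g=f(s+1)$. Expanding via~\eqref{eq:degvcond},
\begin{equation*}
\mathbb{E}\bigl[X_{s+1}^{-1}\bigm|\mathcal{F}_s\bigr]=\frac{\mathbb{P}(\Delta X_s=0\mid\mathcal{F}_s)}{X_s}+\frac{\mathbb{P}(\Delta X_s=1\mid\mathcal{F}_s)}{X_s+1}+\frac{\mathbb{P}(\Delta X_s=2\mid\mathcal{F}_s)}{X_s+2}.
\end{equation*}

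Next I would show, for every integer $X_s\ge 1$, the one-step inequality
\begin{equation*}
\mathbb{E}\bigl[X_{s+1}^{-1}\bigm|\mathcal{F}_s\bigr]\leq \frac{1}{X_s}\Bigl(1-\frac{1}{6s}\Bigr).
\end{equation*}
When $X_s\ge 2$, the elementary estimate $\frac{1}{X_s+k}\leq \frac{1}{X_s}-\frac{k}{2X_s^2}$ (valid for $k\le X_s$) combined with $\mathbb{P}(\Delta X_s=1)+2\mathbb{P}(\Delta X_s=2)=\mathbb{E}[\Delta X_s\mid\mathcal{F}_s]=(1-g/2)X_s/s$ from~\eqref{eq:degvconde} yields the bound with constant $1/4$ in place of $1/6$. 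When $X_s=1$, the coefficients $1/(1+k)$ for $k=1,2$ are at least $1/3$, so
\begin{equation*}
\mathbb{E}[X_{s+1}^{-1}\mid\mathcal{F}_s]\leq 1-\tfrac{1}{3}\mathbb{E}[\Delta X_s\mid\mathcal{F}_s]\leq 1-\frac{1-g/2}{3s},
\end{equation*}
and $g\le 1$ gives the factor $1/6$. The constant $1/6$ is chosen to accommodate both regimes uniformly.

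Iterating from $s=i$ (with initial value $X_i=1$ on $\{Z_i=1\}$) up to $s=t-1$ and using $\log(1-x)\leq -x$ together with $\sum_{s=i}^{t-1} 1/s\ge \log((t-1)/i)$ gives
\begin{equation*}
\mathbb{E}[X_t^{-1}\mid Z_i=1]\leq \prod_{s=i}^{t-1}\Bigl(1-\frac{1}{6s}\Bigr)\leq\exp\!\Bigl(-\tfrac{1}{6}\log\tfrac{t-1}{i}\Bigr)=\Bigl(\frac{t-1}{i}\Bigr)^{-1/6},
\end{equation*}
which establishes~\eqref{eq:invdege}. Finally, Markov's inequality applied to the nonnegative variable $D_t(v_i)^{-1}$ conditional on $\{Z_i=1\}$ with threshold $((t-1)/i)^{-1/12}$ immediately yields~\eqref{eq:invdegmarkov}. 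The only mildly delicate point is checking the one-step bound in the boundary case $X_s=1$, which forces the slightly worse constant $1/6$; everywhere else the calculation is essentially automatic once~\eqref{eq:degvcond} is unpacked.
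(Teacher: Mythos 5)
Your argument is correct and follows essentially the same strategy as the paper: set up the one-step recursion $\Ed[1/D_{s+1}(v_i)\mid\mathcal{F}_s]\le (1-\tfrac{1}{6s})/D_s(v_i)$ via \eqref{eq:degvconde}, iterate from $i$ to $t-1$, compare the product with $\exp\{-\tfrac16\sum 1/s\}$, and finish with Markov's inequality. The only difference is cosmetic: where the paper establishes the one-step bound uniformly in $D_s$ by writing $1/D_{s+1}-1/D_s=-\Delta D_s/(D_{s+1}D_s)$ and using $D_{s+1}\le 3D_s$, you obtain the same constant by a case analysis on $D_s=1$ versus $D_s\ge 2$ — both routes localize to the same inequality $\frac{1}{X+k}\le\frac{1}{X}-\frac{k}{3X^2}$ in the binding case.
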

\begin{proof} 
	If~$Z_i=1$, then for every~$s \geq i$ we have that
	\[
	\Delta D_s(v_i)\geq 0,\quad D_{s+1}(v_i)\leq D_s(v_i)+2 \leq 3D_s(v_i),
	\]
	and that~$D_s(v_i)$ is~$\mathcal{F}_s$ measurable. Together with~\eqref{eq:degvconde}, these facts imply, on the event where~$Z_i=1$, 
	\begin{equation}
	\begin{split}
	\Ed\left[       \frac{1}{D_{s+1}(v_i)}     - \frac{1}{D_{s}(v_i)}      \middle |       \mathcal{F}_s\right]
	&=
	\Ed\left[      - \frac{\Delta D_s(v_i)}{D_{s+1}(v_i)D_{s}(v_i)}         \middle |       \mathcal{F}_s\right]
	\\
	&\leq
	-\frac{1}{3(D_{s}(v_i))^2}\left(1-\frac{f(s+1)}{2}\right)\frac{D_{s}(v_i)}{s}
	\\
	&\leq 
	-\frac{1}{6s}\frac{1}{D_{s}(v_i)},
	\end{split}
	\end{equation}
	since~$f(k)\leq 1$ for very~$k\in\N$. Therefore,
	\begin{equation}
	\Ed\left[    Z_i \cdot ( D_{s+1}(v_i)  )^{-1}        \middle |       \mathcal{F}_s\right] \leq Z_i(1-(6s)^{-1})(D_{s}(v_i))^{-1}.
	\end{equation}
	Iterating the above argument from~$i$ until~$t$, we obtain
	\begin{equation}
	\label{eq:deginv1}
	\Ed\left[    Z_i \cdot ( D_{t}(v_i)  )^{-1}       \right] \leq f(i) \prod_{s=i}^{t-1}\left(1-\frac{1}{6s}\right) \leq f(i) \exp\left\{  -\frac{1}{6} \sum_{s=i}^{t-1}  \frac{1}{s}     \right\}\leq f(i)\left(  \frac{t-1}{i}       \right)^{-\frac{1}{6}},
	\end{equation}
	proving~\eqref{eq:invdege}. We then obtain~\eqref{eq:invdegmarkov} by an elementary application of the Markov inequality:
	\begin{equation}
	\label{eq:deginv2}
	\begin{split}
	\Pd\left( D_t(v_i) \leq \left(\frac{t-1}{i}\right)^{\frac{1}{12}}  \middle | Z_i=1\right)&= 
	\Pd\left(  ( D_{t}(v_i)  )^{-1} \geq \left(\frac{t-1}{i}\right)^{-\frac{1}{12}}   \middle | Z_i=1   \right)
	\\&\leq 
	\left(\frac{t-1}{i}\right)^{\frac{1}{12}} \Ed\left[ ( D_{t}(v_i)  )^{-1} \middle | Z_i=1 \right]
	\\&\leq    \left(  \frac{t-1}{i} \right)^{-\frac{1}{12}}.
	\end{split}
	\end{equation}
\end{proof}

We now provide an elementary consequence of the above result, which uses the union bound in order to show that, with high probability, every vertex born before time~$t^{\frac{1}{12}}$ has degree at least~$t^{\frac{1}{15}}$ by time~$t$.
\begin{lemma}
	\label{l:deglowerbound}
	Using the same notation as in Lemma~\ref{l:invdege} we have, for every edge-step function~$f$ and for sufficiently large~$t\in\N$,
	\begin{equation}
	\label{eq:deglowerbound}
	\Pd\left( \exists i \in \N, 1\leq i \leq t^{\frac{1}{12}},\text{ such that }Z_i=1\text{ and }D_t(v_i)\leq t^{\frac{1}{15}} \right)\leq    Ct^{-\frac{1}{144}}.
	\end{equation}
	\begin{proof}
		By the union bound and equation~\eqref{eq:invdegmarkov}, we have that the probability in the left hand side of~\eqref{eq:deglowerbound} is smaller than or equal to
		\begin{equation}
		\label{eq:deglowerbound2}
		\begin{split}
		\sum_{i=1}^{t^{\frac{1}{12}}} \Pd\left( Z_i=1,D_t(v_i)\leq t^{\frac{1}{15}} \right)
		&\leq
		\sum_{i=1}^{t^{\frac{1}{12}}} f(i) \Pd\left( D_t(v_i) \leq \left(\frac{t-1}{i}\right)^{\frac{1}{12}}  \middle | Z_i=1\right)
		\\
		&\leq \sum_{i=1}^{t^{\frac{1}{12}}} \left(\frac{t-1}{i}\right)^{-\frac{1}{12}} 
		\\
		&\leq C t^{-\frac{1}{12}+\frac{1}{12}\left(1-\frac{1}{12}\right)}
		\\
		&\leq C t^{-\frac{1}{144}},
		\end{split}
		\end{equation}
		finishing the proof of the Lemma.
	\end{proof}
\end{lemma}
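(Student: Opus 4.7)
The plan is a standard union bound over the range $i\in\{1,\dots,\lfloor t^{1/12}\rfloor\}$, with Lemma~\ref{l:invdege} supplying the per-vertex estimate. For each such $i$ I would like to invoke equation~\eqref{eq:invdegmarkov} with the threshold $t^{1/15}$, which first requires verifying the event inclusion $\{D_t(v_i)\leq t^{1/15}\}\subseteq \{D_t(v_i)\leq ((t-1)/i)^{1/12}\}$. For $i\leq t^{1/12}$ and $t$ sufficiently large one has $(t-1)/i\geq \tfrac{1}{2}t^{11/12}$, so the right-hand threshold is at least $2^{-1/12}t^{11/144}$; since $11/144>1/15$, the inclusion holds uniformly over the range.

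With this in hand, Lemma~\ref{l:invdege} gives $\Pd(D_t(v_i)\leq t^{1/15}\mid Z_i=1)\leq ((t-1)/i)^{-1/12}$. Since $Z_i\sim\mathrm{Ber}(f(i))$ and $f(i)\leq 1$, decomposing $\Pd(Z_i=1,D_t(v_i)\leq t^{1/15})=f(i)\Pd(D_t(v_i)\leq t^{1/15}\mid Z_i=1)$ and taking the union bound yields
\[
\Pd\bigl(\exists\, i\leq t^{1/12}\colon Z_i=1,\ D_t(v_i)\leq t^{1/15}\bigr)\leq (t-1)^{-1/12}\sum_{i=1}^{\lfloor t^{1/12}\rfloor} i^{1/12}.
\]
Comparing $\sum_{i=1}^{N} i^{1/12}$ with $\int_{0}^{N}x^{1/12}\,dx$ bounds the remaining sum by a constant times $N^{13/12}$; specializing to $N=t^{1/12}$ and collecting exponents produces the announced polynomial decay of order $Ct^{-1/144}$.

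I do not expect any real obstacle: all of the probabilistic content has already been extracted in Lemma~\ref{l:invdege}, and the present statement is essentially a union-bound corollary. The only care required is to pick the threshold $t^{1/15}$ small enough that~\eqref{eq:invdegmarkov} applies uniformly in $i$ across $[1,t^{1/12}]$, and to keep the various powers of $12$ aligned when estimating the resulting sum.
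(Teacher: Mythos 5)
Your proposal reproduces the paper's argument step for step and therefore inherits an arithmetic slip present in the paper's own proof. After the union bound you arrive at $(t-1)^{-1/12}\sum_{i=1}^{\lfloor t^{1/12}\rfloor}i^{1/12}$. Integral comparison gives $\sum_{i=1}^{N}i^{1/12}\asymp N^{13/12}$, and with $N=t^{1/12}$ this is $\asymp t^{13/144}$; multiplying by $(t-1)^{-1/12}\approx t^{-12/144}$ yields $t^{1/144}$, not $t^{-1/144}$. The sign of the exponent is wrong: the quantity you obtain at the end grows with $t$, so it cannot be at most $Ct^{-1/144}$ for any fixed $C$. (The largest individual term is only $\approx t^{-11/144}$, but you are summing $t^{12/144}$ of them.)

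The lemma itself survives with a different exponent, and the fix is \emph{not} to enlarge the event $\{D_t(v_i)\leq t^{1/15}\}$ to $\{D_t(v_i)\leq ((t-1)/i)^{1/12}\}$ before applying Markov: that enlargement throws away the fact that $t^{1/15}$ is much smaller than $((t-1)/i)^{1/12}$ across the whole range $i\leq t^{1/12}$, which is precisely the slack one needs. Instead, apply Markov directly at the true threshold using~\eqref{eq:invdege}: $\Pd\bigl(Z_i=1,\ D_t(v_i)\leq t^{1/15}\bigr)=\Pd\bigl(Z_i(D_t(v_i))^{-1}\geq t^{-1/15}\bigr)\leq t^{1/15}\,\Ed\bigl[Z_i(D_t(v_i))^{-1}\bigr]\leq t^{1/15}f(i)\bigl((t-1)/i\bigr)^{-1/6}$. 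Summing over $i\leq t^{1/12}$ and using $f\leq 1$ gives at most $C\,t^{1/15}(t-1)^{-1/6}(t^{1/12})^{7/6}=C\,t^{1/15-1/6+7/72}=C\,t^{-1/360}$, a genuine polynomial decay. Since what is used downstream (in the proof of Theorem~\ref{t:upperbounddiam}) is only some negative power of $t$, replacing $1/144$ by $1/360$ in the statement costs nothing.
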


\section{General lower bound for the diameter: proof of Theorem~\ref{t:lowerbounddiam}}\label{s:lower}
%! TEX root=Diameter-Edge_step.tex

The proof of Theorem~$\ref{t:lowerbounddiam}$ follows a second moment argument. The idea is to count the number of ``long'' (the specific size depending on~$f$ and~$t$) \emph{isolated paths} in~$G_t(f)$. We begin by stating two key lemmas for the proof. The former,  Lemma~\ref{l:numsnakes}, states that the expected number of isolated paths goes to infinity with~$t$. The latter, Lemma \ref{l:isopathcor}, guarantees that the presence of a specific isolated path is almost independent from the presence of some other given isolated path whenever said paths are disjoint. This ``almost independence'' makes the second moment of the number of such paths very close to the first moment squared, which allows us to apply Paley-Zygmund inequality.

We start by defining precisely what we mean by an isolated path.
\begin{definition}[$t$-isolated path] Let~$l$ be a positive integer. Let $\vec{t} = (t_1,..,t_l)$ be a vector of distinct positive integers. We say that this vector corresponds to an \emph{isolated path} $\{v_{t_1},\dots,v_{t_l}\}$ in~$G_t(f)$ if and only if:
	\begin{itemize}
		\item $t_l\le t$;
		
		\item $t_i<t_j$ whenever $1\le i < j \le l$;
		
		\item during each time~$t_i$, $i=1,\dots,l$, a vertex-step is performed;
		
		\item for every integer $k\le l$, the subgraph induced by the vertices $\{v_{t_i}\}_{1\le i \le k}$ is connected in~$G_{t_k}(f)$;
		
		\item for $i=1,\dots,l-1$, the degree of~$v_{t_i}$ in~$G_t(f)$ is~$2$. The degree of~$v_{t_l}$ in~$G_t(f)$ is~$1$.
	\end{itemize}
\end{definition}
In other words, an $t$-isolated path $\{v_{t_i}\}_{1\le i \le l}$ is a path where each vertex~$v_{t_i}$, for $i=2,\dots,l$, is born at time~$t_i$ and makes its first connection to its predecessor~$v_{t_{i-1}}$. Other than that, no other vertex or edge gets attached to~$\{v_{t_i}\}_{1\le i \le l}$. We will denote~$\{v_{t_i}\}_{1\le i \le l}$ by~$v_{\vec{t}}$.

Given~$\xi \in(0,1)$, we will denote by~$\mathcal{S}_{l,\xi}(t)$ the set of all $t$-isolated paths in~$G_t(f)$ of size~$l$ whose vertices were created between times~$\xi t$ and~$t$. Having all the necessary notation, we now state the two technical lemmas needed for the proof of Theorem \ref{t:lowerbounddiam}.

\begin{lemma}[Average number of $t$-isolated paths]\label{l:numsnakes}Let $f$ be a non-increasing edge-step function, then, for any $0<\xi<1$ and any integer~$l$, the following lower bound holds:
	\begin{equation}
	\label{eq:nctbound1}
	\Ed \left[|\mathcal{S}_{l,\xi}(t)|\right] \ge \binom{(1-\xi)  t}{l}\frac{f(t)^l}{(2t)^{l-1}}\left(1-\frac{2l}{ \xi t} \right)^t.
	\end{equation}
	Furthermore, for
	\begin{equation}
	\label{eq:nctboundl}
	l \leq \frac{1}{3} \left(\frac{\log t}{\log \log t} \wedge \frac{\log t}{-\log f(t)}\right),
	\end{equation}
	we have that, for sufficiently large~$t$,
	\begin{equation}
	\label{eq:nctbound2}
	\Ed \left[|\mathcal{S}_{l,\xi}(t)|\right] \ge t^{\frac{1}{4}}.
	\end{equation}
\end{lemma}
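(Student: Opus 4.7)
The plan is a first-moment lower bound via an explicit per-vector calculation. Fix an increasing vector $\vec{t}=(t_1,\dots,t_l)$ of integers in $(\xi t, t]$, bound below the probability that the vertices born at these times form an isolated path, and sum over the $\binom{\lfloor(1-\xi)t\rfloor}{l}$ such vectors to obtain \eqref{eq:nctbound1}. The per-vector estimate drives everything, so I will concentrate on that.

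I would decompose the isolated-path event into a conjunction of events indexed by $s\in\{t_1,\dots,t\}$: (i) at each $s=t_i$ a vertex-step is performed (probability $f(t_i)\ge f(t)$); (ii) at each $s=t_i$ with $i\ge 2$, the newborn vertex attaches precisely to $v_{t_{i-1}}$; (iii) at each $s\in(t_1,t]\setminus\{t_2,\dots,t_l\}$, the step performed produces no edge incident to the path. Using the attachment identities in \eqref{eq:degvcond}, one checks that the probability of any edge at time $s$ being incident to a vertex subset of total degree $D$ is at most $D/(s-1)$ irrespective of step type — the vertex-step case gives $D/(2(s-1))$ directly and the edge-step case yields $1-(1-D/(2(s-1)))^2\le D/(s-1)$. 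On the intersection of the prior good events the path has total degree at most $2l$ throughout, and the target $v_{t_{i-1}}$ in~(ii) has degree exactly $1$ at the moment $v_{t_i}$ is born. Chaining conditional probabilities through the tower property for $(\mathcal{F}_s)_{s\ge t_1}$, and then applying $f(t_i)\ge f(t)$, $t_i-1\le t$, and $s-1\ge\xi t$, yields
$$\Pd\bigl[v_{\vec{t}}\text{ is an isolated path}\bigr]\ \ge\ \frac{f(t)^l}{(2t)^{l-1}}\left(1-\frac{2l}{\xi t}\right)^t,$$
and summation over $\vec{t}$ delivers \eqref{eq:nctbound1}.

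For \eqref{eq:nctbound2} I would take logarithms in \eqref{eq:nctbound1}. Using $\binom{(1-\xi)t}{l}\ge ((1-\xi)t/l)^l$ and $(1-2l/(\xi t))^t\ge e^{-4l/\xi}$ (valid once $2l/(\xi t)\le 1/2$, which is automatic under \eqref{eq:nctboundl} for large $t$), the right-hand side of \eqref{eq:nctbound1} dominates $2t\cdot\bigl((1-\xi)f(t)/(2l)\bigr)^l\cdot e^{-4l/\xi}$. Hence, after dropping lower-order terms, it suffices to show
$$l(-\log f(t))+l\log l\ \le\ \bigl(\tfrac{3}{4}-o(1)\bigr)\log t.$$
The two halves of the hypothesis \eqref{eq:nctboundl} are tailored precisely for this: $l\le \log t/(3(-\log f(t)))$ handles the first summand by $(\log t)/3$, and $l\le \log t/(3\log\log t)$ combined with $\log l\le\log\log t$ handles the second by another $(\log t)/3$. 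The sum is $(2/3)\log t+o(\log t)$, safely below the $(3/4)\log t$ budget, so $\Ed[|\mathcal{S}_{l,\xi}(t)|]\ge t^{1/4}$ follows for all sufficiently large $t$.

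The main obstacle is the bookkeeping behind (ii) and (iii): the clean degree facts I invoke — that $v_{t_{i-1}}$ has degree exactly $1$ when $v_{t_i}$ is born, and that the path's total degree stays bounded by $2l$ — hold only on the intersection of prior good events, so the product estimate must be assembled by chaining conditional probabilities through the filtration rather than multiplying unconditional ones. Once that is properly set up, the crude uniform bound $D\le 2l$ is what makes the no-attachment factors telescope into the clean factor $(1-2l/(\xi t))^t$ that appears in the statement.
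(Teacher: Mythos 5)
Your proposal is correct and follows essentially the same route as the paper: the same three-part decomposition of the isolated-path event (vertex-steps at the prescribed times, correct attachments, no other attachments), the same use of monotonicity to replace $f(t_i)$ by $f(t)$, and the same final log-bookkeeping using the two halves of the hypothesis on $l$. The only cosmetic difference is that you bound $\binom{(1-\xi)t}{l}$ below by $((1-\xi)t/l)^l$ where the paper invokes Stirling, which yields the same leading-order estimate.
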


\begin{lemma}[Correlation between $t$-isolated paths]\label{l:isopathcor} Let~$l$ be such that~\eqref{eq:nctboundl} is satisfied. For two $t$-isolated paths with disjoint time vectors~$\vec{t}$ and~$\vec{r}$, we have
	\begin{equation}
	\label{eq:pvtvrcomp}
	\frac{\Pd \left(v_{\vec{t}} \in \mathcal{S}_{l,\xi}(t) \right)\Pd \left(v_{\vec{r}} \in \mathcal{S}_{l,\xi}(t) \right)}{ \Pd \left(v_{\vec{t}}, v_{\vec{r}}\in \mathcal{S}_{l,\xi}(t) \right)}  = 1 + o(1).
	\end{equation}
\end{lemma}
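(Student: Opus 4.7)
The strategy is to write $\Pd(v_{\vec{t}}, v_{\vec{r}} \in \mathcal{S}_{l,\xi}(t))$ as a time-step product analogous to~\eqref{eq:oneskaneprob}, and then compare it against the product $\Pd(v_{\vec{t}} \in \mathcal{S}_{l,\xi}(t))\Pd(v_{\vec{r}} \in \mathcal{S}_{l,\xi}(t))$ factor by factor. Since $\vec{t}$ and $\vec{r}$ are disjoint, the set $A_s := v_{\vec{t}} \cup v_{\vec{r}}$ satisfies $D_s(A_s) = D_s(v_{\vec{t}}) + D_s(v_{\vec{r}})$, so the same derivation leading to~\eqref{eq:deltavect} gives, for each time $s \notin \vec{t} \cup \vec{r}$, that the probability of no new edge being attached to $A_{s-1}$ equals
\[
A_{t,r}(s) := 1 - \Bigl[1 - \tfrac{f(s)}{2} - (1-f(s))\tfrac{D_{s-1}(v_{\vec{t}})+D_{s-1}(v_{\vec{r}})}{4(s-1)}\Bigr]\tfrac{D_{s-1}(v_{\vec{t}})+D_{s-1}(v_{\vec{r}})}{s-1}.
\]
At times $s = t_k \in \vec{t}$, the event $\{v_{\vec{t}} \in \mathcal{S}\}$ forces the unique new edge to land on $v_{t_{k-1}}$, and hence automatically misses $v_{\vec{r}}$; so the contribution at $t_k$ to the joint probability is exactly $f(t_k)/(2(t_k-1))$, identical to the contribution to $\Pd(v_{\vec{t}} \in \mathcal{S})$. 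The analogous statement holds at times $r_k\in \vec{r}$.

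First, I would denote by $A_t(s)$ and $A_r(s)$ the marginal ``no new connection'' factors appearing in~\eqref{eq:oneskaneprob} for $v_{\vec{t}}$ and $v_{\vec{r}}$, respectively, and then decompose
\[
\frac{\Pd(v_{\vec{t}} \in \mathcal{S})\Pd(v_{\vec{r}} \in \mathcal{S})}{\Pd(v_{\vec{t}},v_{\vec{r}}\in \mathcal{S})} = \prod_{s \notin \vec{t}\cup \vec{r}} \frac{A_t(s)A_r(s)}{A_{t,r}(s)} \cdot \prod_{k=1}^l A_r(t_k) \cdot \prod_{k=1}^l A_t(r_k),
\]
since the creation factors cancel identically between numerator and denominator. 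The task then reduces to showing that the logarithm of this product vanishes as $t \to \infty$.

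Second, I would carry out the key algebraic estimate. With $D_t := D_{s-1}(v_{\vec{t}})$ and $D_r := D_{s-1}(v_{\vec{r}})$, a direct expansion gives
\[
\log A_t(s) + \log A_r(s) - \log A_{t,r}(s) = -\frac{(1-f(s))\,D_t D_r}{2(s-1)^2} + O\!\left(\frac{l^2}{(s-1)^2}\right),
\]
where the error collects all second-order Taylor remainders of $\log(1-x)$; we use that $D_t, D_r \le 2l-1$, so each of $g_t(s), g_r(s), g_{t,r}(s)$ is $O(l/(s-1))$, which is small uniformly for $s \geq \xi t$. For the boundary terms, $\log A_r(t_k) = -O(l/(\xi t))$ and similarly for $\log A_t(r_k)$.

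Finally, summing over time steps in $[\xi t, t]$ (the factors being trivially $1$ before the first vertex of each path is born), we obtain
\[
\Bigl|\log\tfrac{\Pd(v_{\vec{t}})\Pd(v_{\vec{r}})}{\Pd(v_{\vec{t}},v_{\vec{r}})}\Bigr| \leq \sum_{s=\xi t}^t \frac{Cl^2}{(s-1)^2} + 2l\cdot \frac{Cl}{\xi t} \le \frac{C' l^2}{\xi t},
\]
and since~\eqref{eq:nctboundl} gives $l = O(\log t/\log\log t)$, the right-hand side is $o(1)$, establishing~\eqref{eq:pvtvrcomp}. The main technical obstacle is the careful bookkeeping at creation times---verifying that requiring $v_{t_k}$ to attach to $v_{t_{k-1}}$ really does exhaust the ``no edge to $v_{\vec r}$'' constraint at $t_k$ (no spurious factor remains)---together with ensuring the second-order Taylor remainders are controlled uniformly over all admissible pairs $(\vec{t},\vec{r})$. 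Once these points are handled, the estimate is driven entirely by the smallness of $l^2/t$.
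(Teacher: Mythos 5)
Your proposal is correct and follows essentially the same route as the paper's proof: both decompose the joint and factored probabilities into time-step products in the manner of~\eqref{eq:oneskaneprob}, split into the cases $s\in\vec t\cup\vec r$ versus $s\notin\vec t\cup\vec r$, observe that at creation times of one path the required edge automatically avoids the other path so no extra factor appears in the joint probability, and control the $\Theta(t)$ remaining factors via $D_{s-1}(v_{\vec t}),D_{s-1}(v_{\vec r})=O(l)$ and $s\geq\xi t$, with everything driven by $l^2/t\to 0$. Your choice to pass to logarithms and Taylor-expand is only a cosmetic variant of the paper's direct product bounds.
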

We postpone the proof of these two lemmas, since their proofs are technical. Below, we show how the main result of this section follows from them.
\begin{proof}[Proof of Theorem \ref{t:lowerbounddiam}]  Recall that Paley-Zygmund's inequality (see e.g. section~$5.5$ of \cite{lyonyuval}) assures us that, for any $ 0\le \theta \le 1$,
	\begin{equation}\label{eq:paley}
	\mathbb{P} \left( |\mathcal{S}_{l,\xi}(t)| > \theta \Ed \left[|\mathcal{S}_{l,\xi}(t)|\right] \right) \ge (1-\theta)^2\frac{\left(\Ed \left[|\mathcal{S}_{l,\xi}(t)|\right]\right)^2}{\Ed \left[|\mathcal{S}_{l,\xi}(t)|^2\right]}.
	\end{equation}
	
	On the other hand, let~$l$ be such that~\eqref{eq:nctboundl} is satisfied, and consider~$\xi \in(0,1)$. Since it is impossible for two non disjoint  and non equal isolated paths to exist at the same time, we have that
	\begin{equation}
	\begin{split}
	\Ed \left[|\mathcal{S}_{l,\xi}(t)|^2 \right] & = \Ed \left[ \left( \sum_{\vec{t}} \mathbb{1} \left\lbrace v_{\vec{t}} \in \mathcal{S}_{l,\xi}(t) \right\rbrace \right)\left( \sum_{\vec{r}} \mathbb{1} \left\lbrace v_{\vec{r}} \in \mathcal{S}_{l,\xi}(t) \right\rbrace \right) \right] \\ 
	& = \sum_{\substack{ \vec{t},\vec{r} \\ \text{disjoint}}} \Pd \left( v_{\vec{t}},v_{\vec{r}} \in \mathcal{S}_{l,\xi}(t) \right) + \Ed \left[|\mathcal{S}_{l,\xi}(t)| \right],
	\end{split}
	\end{equation}
	and that
	\[
	\left( \Ed \left[ |\mathcal{S}_{l,\xi}(t)| \right] \right)^2 = \sum_{\substack{ \vec{t},\vec{r} \\ \text{disjoint}}} \Pd \left( v_{\vec{t}} \in \mathcal{S}_{l,\xi}(t) \right)\Pd \left( v_{\vec{r}} \in \mathcal{S}_{l,\xi}(t)\right) + \sum_{\substack{ \vec{t},\vec{r} \\ \vec{r} \cap \vec{t} \neq \emptyset}} \Pd \left( v_{\vec{t}} \in \mathcal{S}_{l,\xi}(t) \right)\Pd \left( v_{\vec{r}} \in \mathcal{S}_{l,\xi}(t) \right).
	\]
	Therefore, by lemmas \ref{l:numsnakes} and~\ref{l:isopathcor}, we obtain
	\begin{equation}
	\label{e:snakebixao}
	\begin{split}
	\frac{\Ed \left[|\mathcal{S}_{l,\xi}(t)|^2 \right]}{\left( \Ed \left[ |\mathcal{S}_{l,\xi}(t)| \right] \right)^2} 
	&=
	\frac{\sum_{\substack{ \vec{t},\vec{r} \\ \text{disjoint}}} \Pd \left( v_{\vec{t}},v_{\vec{r}} \in \mathcal{S}_l(t) \right) }{\left( \Ed \left[ |\mathcal{S}_{l,\xi}(t)| \right] \right)^2}
	+
	\frac{ \Ed \left[|\mathcal{S}_{l,\xi}(t)| \right]}{\left( \Ed \left[ |\mathcal{S}_{l,\xi}(t)| \right] \right)^2}
	\\
	&\le \frac{\sum_{\substack{ \vec{t},\vec{r} \\ \text{disjoint}}} \Pd \left( v_{\vec{t}},v_{\vec{r}} \in \mathcal{S}_{l,\xi}(t) \right) }{\sum_{\substack{ \vec{t},\vec{r} \\ \text{disjoint}}} \Pd \left( v_{\vec{t}} \in \mathcal{S}_{l,\xi}(t) \right)\Pd \left( v_{\vec{r}} \in \mathcal{S}_{l,\xi}(t) \right)} + \frac{1}{\Ed \left[ |\mathcal{S}_{l,\xi}(t)| \right]} 
	\\
	&\le \frac{\sum_{\substack{ \vec{t},\vec{r} \\ \text{disjoint}}} (1+o(1))\Pd \left( v_{\vec{t}} \in \mathcal{S}_{l,\xi}(t) \right)\Pd \left( v_{\vec{r}} \in \mathcal{S}_{l,\xi}(t) \right) }{\sum_{\substack{ \vec{t},\vec{r} \\ \text{disjoint}}} \Pd \left( v_{\vec{t}} \in \mathcal{S}_{l,\xi}(t) \right)\Pd \left( v_{\vec{r}} \in \mathcal{S}_{l,\xi}(t) \right)} + \frac{1}{\Ed \left[ |\mathcal{S}_{l,\xi}(t)| \right]} 
	\\
	&\le
	1+ o(1),
	\end{split}
	\end{equation}
	then by choosing $\theta = \theta(t)=\Ed \left[|\mathcal{S}_{l,\xi}(t)|\right]^{-1/2}$ and combining \eqref{e:snakebixao} and \eqref{eq:paley} we conclude the proof.
\end{proof}
We now prove the previously stated lemmas.
\begin{proof}[Proof of Lemma \ref{l:numsnakes}]
	The random variable $|\mathcal{S}_{l,\xi}(t)|$ can be written as
	\begin{equation}
	\label{e:numsnakes}
	|\mathcal{S}_{l,\xi}(t)| = \sum_{t_1 <t_2<\dots<t_l } \mathbb{1}\left\lbrace v_{\vec{t}} \in \mathcal{S}_{l,\xi}(t)\right\rbrace \quad \implies \quad \Ed \left[|\mathcal{S}_{l,\xi}(t)|\right] = \sum_{t_1 <t_2<\dots<t_l } \mathbb{P}\left( v_{\vec{t}} \in \mathcal{S}_{l,\xi}(t)\right).
	\end{equation}
	So it will be important to obtain a proper lower bound for $\mathbb{P}\left( v_{\vec{t}} \in \mathcal{S}_{l,\xi}(t)\right)$. Given a time vector of an isolated path~$\vec{t}=(t_1,\dots,t_l)$ such that $ \xi t\le t_1 <t_2<\dots<t_l\le t$, it follows that
	\begin{equation}
	\label{eq:pvectlower}
	\mathbb{P}\left( v_{\vec{t}} \in \mathcal{S}_{l,\xi}(t)\right) \ge \frac{f(t)^l}{(2t)^{l-1}}\left(1-\frac{2l}{  \xi t } \right)^t,
	\end{equation}
	since in order for~$v_{\vec{t}}$ to be in~$\mathcal{S}_{l,\xi}(t)$, we need to assure that~$l$ vertices are born exactly at times~$t_1,\dots,t_l$ (which happens with probability greater than~$f(t)^l$, by the monotonicity of~$f$), that~$v_{t_i}$ connects to~$v_{t_{i-1}}$ for every~$i=2,\dots,l$ (which happens with probability greater than~$(2t)^{-(l-1)}$), and that no other vertex or edge connects to~$v_{\vec{t}}$ until time~$t$ (which happens with probability greater than~$\left(1-\frac{2l}{  \xi t } \right)^t$).
	
	Finally, by counting the number of possible ways to choose $ t_1 <t_2<\dots<t_l$ so that~$t_i \in [\xi t,t]$ for $1\le i \le l$, we obtain~\ref{eq:nctbound1}. 
	
	We now assume~$l$ to be such that~(\ref{eq:nctboundl}) holds. Stirling's formula gives us
	\begin{equation}
	\nonumber
	\begin{split}
	\log \left(\binom{ (1-\xi )t}{l}\right) &\ge C+(1-\xi )t\log((1-\xi )t) -(1-\xi )t +\frac{\log((1-\xi )t)}{2}
	\\
	&\quad-l\log l + l - \frac{\log l }{2}
	\\
	&\quad- ((1-\xi )t-l)\log((1-\xi )t-l)+(1-\xi )t-l    - \frac{\log((1-\xi )t-l)}{2},
	\end{split}
	\end{equation}
	Since~$l \ll t$, we obtain
	\begin{equation}
	\label{eq:binlowerbound}
	\log \left(\binom{ (1-\xi )t}{l}\right) \geq l\log t - l \log l -Cl.
	\end{equation}
	Equation~\eqref{eq:pvectlower} then implies, again for sufficiently large~$t$,
	\begin{equation}
	\label{eq:logpvectlower}
	\log\left(\mathbb{P}\left( v_{\vec{t}} \in\mathcal{S}_{l,\xi}(t)\right) \right)
	\ge  l \log f(t) - (l-1)\log t -Cl.
	\end{equation}
	Combining the above inequality with~\eqref{eq:nctbound1}, \eqref{eq:nctboundl}, and~\eqref{eq:binlowerbound} gives us that, for large enough~$t$,
	\begin{equation*}
	\begin{split}
	\Ed \left[|\mathcal{S}_{l,\xi}(t)|\right]   \ge  \exp\left\lbrace  l\log t - l \log l + l \log f(t) - (l-1)\log t -Cl\right\rbrace
	\geq
	t^{\frac{1}{4}},
	\end{split}
	\end{equation*}
	since
	\[
	l \log l \leq \frac{1}{3} \frac{\log t}{\log \log t}    \log\log t \left(1 -\frac{\log\log\log t  +\log 3}{\log \log t}\right)     \leq \frac{\log t}{3}  ,
	\]
	which finishes the proof of the lemma.
\end{proof}

\begin{proof}[Proof of Lemma \ref{l:isopathcor}] We begin the proof by making some remarks regarding the existence of a specific $t$-isolated path and by introducing a new notation. Given a $t$-isolated path $v_{\vec{t}}=\{v_{t_j}\}_{1\le j \le l}$, we denote by~$D_r(v_{\vec{t}})$ the sum of the degrees of each of its vertices at time~$r$, i.e.\ :
	\begin{equation}
	D_r(v_{\vec{t}}) = \sum_{t_i \in \vec{t}} D_r(v_{t_i}),
	\end{equation}
	where we assumed that $D_r(v_{t_i})=0$ if $t_i>r$. Note that if~$t_l\le r\le t$ and~$v_{\vec{t}}$ has size $l$, then $D_r(v_{\vec{t}}) = 2l-1$. Furthermore, by the same reasoning as in~\eqref{eq:degvcond},
	\begin{equation}
	\label{eq:deltavect}
	\begin{split}
	\Pd \left( \Delta  D_s(v_{\vec{t}}) =0 ~\middle|\mathcal{F}_s\right) &=
	f(s+1)\left( 1-\frac{ D_s(v_{\vec{t}})}{2s}    \right) +(1-f(s+1))\left( 1-\frac{ D_s(v_{\vec{t}})}{2s}    \right)^2
	\\
	&= 1 - \left(1 - \frac{f(s+1)}{2}    -(1-f(s+1))\frac{D_s(v_{\vec{t}})}{4s}\right)\frac{D_s(v_{\vec{t}})}{s}.
	\end{split}
	\end{equation}
	
	Regarding $t$-isolated paths, observe that, in order for a $t$-isolated path $v_{\vec{t}}$ to appear the following must happen:
	\begin{itemize}
		\item a vertex~$v_{t_1}$ must be be created at time~$t_1$, which happens with probability~$f(t_1)$;
		\item between times~$t_1+1$ and~$t_2 -1$ there can be no new connection to~$v_{t_1}$, which, by~\eqref{eq:deltavect}, happens with probability
		\[
		\prod_{r_1= t_1 +1}^{t_2 -1}\left(1- \left(1 - \frac{f(r_1)}{2}-(1-f(r_1))\frac{1}{4(r_1-1)}\right)\frac{1}{r_1-1}\right);
		\]
		\item In general, at time~$t_k$ a vertex~$v_{t_k}$ is created and makes its first connection to~$v_{t_{k-1}}$,  no new connection is then made to~$\{v_{t_j}\}_{1\le j \le k}$ between times~$t_k + 1$ and~$t_{k+1} -1$ for every~$k=2,\dots,l-1$, all this happens with probability equal to
		\begin{align*}
		\lefteqn{f(t_k)\frac{1}{2(t_k-1)}\prod_{r_k= t_k +1}^{t_{k+1} -1}\left(1- \left(1 - \frac{f(r_k)}{2}-(1-f(r_k))\frac{D_{r_k}(v_{\vec{t}}) }{4(r_k-1)}\right)\frac{D_{r_k}(v_{\vec{t}}) }{r_k-1}\right)}\quad
		\\&=
		f(t_k)\frac{1}{2(t_k-1)}\prod_{r_k= t_k +1}^{t_{k+1} -1}\left(1- \left(1 - \frac{f(r_k)}{2}-(1-f(r_k))\frac{2k-1 }{4(r_k-1)}\right)\frac{2k-1}{r_k-1}\right);
		\end{align*}
		\item finally, a vertex~$v_{t_l}$ is born at time~$t_l$, connects to~$v_{t_{l-1}}$ and no new connection is made to~$\{v_{t_j}\}_{1\le j \le l}$ between times~$t_l+1$ and~$t$.
	\end{itemize}
	This implies
	\begin{equation}
	\label{eq:oneskaneprob}
	\begin{split}
	\lefteqn{\Pd \left(v_{\vec{t}} \in \mathcal{S}_{l,\xi}(t) \right)} \\& =f(t_1)
	\prod_{r_1= t_1 +1}^{t_2 -1}\left(1- \left(1 - \frac{f(r_1)}{2}-(1-f(r_1))\frac{1}{4(r_1-1)}\right)\frac{1}{r_1-1}\right)
	\\
	&\quad\times\dots\times
	f(t_k)\frac{1}{2(t_k-1)}\prod_{r_k= t_k +1}^{t_{k+1} -1}\left(1- \left(1 - \frac{f(r_k)}{2}-(1-f(r_k))\frac{2k-1 }{4(r_k-1)}\right)\frac{2k-1}{r_k-1}\right)
	\\
	&\quad\times
	\dots\times f(t_l)\frac{1}{2(t_l-1)}\prod_{r_l= t_l +1}^{t}\left(1- \left(1 - \frac{f(r_l)}{2}-(1-f(r_l))\frac{2l-1 }{4(r_l-1)}\right)\frac{2l-1}{r_l-1}\right).
	\end{split}
	\end{equation}
	Moreover, given two time vectors~$\vec{r}$ and~$\vec{t}$, we note that $\Pd \left(v_{\vec{t}}, v_{\vec{r}} \in \mathcal{S}_{l,\xi}(t) \right)$ is only nonzero if~$\vec{r}$ and~$\vec{t}$ have either disjoint or identical sets of entries.
	
	Then, to prove the lemma we will make a comparison between the two probabilities terms $\Pd \left(v_{\vec{t}}, v_{\vec{r}} \in \mathcal{S}_{l,\xi}(t) \right)$ and~$\Pd \left(v_{\vec{t}} \in \mathcal{S}_{l,\xi}(t) \right)\Pd \left(v_{\vec{r}} \in \mathcal{S}_{l,\xi}(t) \right)$. We can write both these terms as products in the manner of~\eqref{eq:oneskaneprob}. We can then compare the terms from these products associated to each time $s\in[\xi t,t]$. There are two cases we must study.
	
	\textit{Case 1: $s \in \vec{t}$ but $s \notin \vec{r}$ ($s \notin \vec{t}$ but $s \in \vec{r}$.).}
	
	The product term related to time~$s$ in~$\Pd \left(v_{\vec{t}}, v_{\vec{r}} \in \mathcal{S}_{l,\xi}(t) \right)$ is 
	\begin{equation}
	\label{e:compsnake1}
	\frac{f(s)}{2(s-1)},
	\end{equation}
	since a new vertex is created and then makes its first connection specifically to the latest vertex of~$\vec{t}$. On the other hand, the  term related to time~$s$ in $\Pd \left(v_{\vec{t}} \in \mathcal{S}_{l,\xi}(t) \right)\Pd \left(v_{\vec{r}} \in \mathcal{S}_{l,\xi}(t) \right)$ is 
	\begin{equation*}
	\frac{f(s)}{2(s-1)}
	\left(1- \left(1-\frac{f(s)}{2}-(1-f(s))\frac{D_{s-1}(v_{\vec{r}})}{4(s-1)}\right)\frac{D_{s-1}(v_{\vec{r}})}{s-1}\right),
	\end{equation*}
	since the term  related to~$s$ in the product form of $\Pd \left(v_{\vec{t}} \in \mathcal{S}_{l,\xi}(t) \right)$ continues to be equal to~\eqref{e:compsnake1}, but the related term in  $\Pd \left(v_{\vec{r}} \in \mathcal{S}_{l,\xi}(t) \right)$ is
	\begin{equation}
	\label{e:compsnake2}
	\left(1- \left(1-\frac{f(s)}{2}-(1-f(s))\frac{D_{s-1}(v_{\vec{r}})}{4(s-1)}\right)\frac{D_{s-1}(v_{\vec{r}})}{s-1}\right).
	\end{equation}
	The above expression is the term that will appear regarding the time~$s$ in the fraction in the left hand side of~\eqref{eq:pvtvrcomp}.
	This case occurs $2l$ times since the isolated paths are disjoint. Thus, recalling that $s\in [\xi  t,t]$, $l \leq 3^{-1}\log(t)/\log(\log(t))$, and that the degree of each isolated path is at most~$2l-1$, we obtain that there exist constants $C_1,C_2>0$ such that we can bound the product of all the terms of the form~\eqref{e:compsnake2} from above by
	\[
	\left(1 - \frac{C_1}{t} \right)^{2l},
	\]
	and from below by
	\[
	\left(1 - \frac{C_2 l}{t} \right)^{2l}.
	\]
	Observe that both products go to $1$ as $t$ goes to infinity.
	
	\textit{Case 2: $s \notin \vec{t}$ and $s \notin \vec{r}$.}
	
	In $\Pd \left(v_{\vec{r}} \in \mathcal{S}_{l,\xi}(t) \right)$ as well as in $\Pd \left(v_{\vec{t}} \in \mathcal{S}_{l,\xi}(t) \right)$ we see terms of the form~\eqref{e:compsnake2}, since we must avoid the isolated paths in both events. But in the term related to $\Pd \left(v_{\vec{t}}, v_{\vec{r}} \in \mathcal{S}_{l,\xi}(t) \right)$ we actually observe
	\[
	\left(1- \left(1-\frac{f(s)}{2}-(1-f(s))\frac{(D_{s-1}(v_{\vec{t}})+D_{s-1}(v_{\vec{r}}))}{4(s-1)}\right)\frac{(D_{s-1}(v_{\vec{t}})+D_{s-1}(v_{\vec{r}}))}{s-1}\right)
	,
	\]
	since we must guarantee that neither isolated path receives a connection. We note however that
	\begin{align*}
	\lefteqn{\left(1- \left(1-\frac{f(s)}{2}-(1-f(s))\frac{D_{s-1}(v_{\vec{r}})}{4(s-1)}\right)\frac{D_{s-1}(v_{\vec{r}})}{s-1}\right)}
	\\
	&\phantom{**********************}\times\left(1- \left(1-\frac{f(s)}{2}-(1-f(s))\frac{D_{s-1}(v_{\vec{t}})}{4(s-1)}\right)\frac{D_{s-1}(v_{\vec{t}})}{s-1}\right)\quad
	\\
	&=
	\left(1- \left(1-\frac{f(s)}{2}-(1-f(s))\frac{(D_{s-1}(v_{\vec{t}})+D_{s-1}(v_{\vec{r}}))}{4(s-1)}\right)\frac{(D_{s-1}(v_{\vec{t}})+D_{s-1}(v_{\vec{r}}))}{s-1}\right)
	\\
	&\phantom{**********************}\times\left(1+O\left(\frac{l^2}{t^2}\right)\right),
	\end{align*}
	since~$D_{s-1}(v_{\vec{t}}),D_{s-1}(v_{\vec{r}})\leq 2l-1$ and~$s \geq \xi t$. In the fraction in the left hand side of~\eqref{eq:pvtvrcomp}, we will then have $\Theta(t)$ terms of the form  
	\begin{equation*}
	\left(1+O\left(\frac{l^2}{t^2}\right)\right),
	\end{equation*}
	But again, as in Case~$1$, their product goes to 1 as~$t\to\infty$ since $l^2 = o(t)$. This finishes the proof of the lemma.
\end{proof}

\section{General upper bound for the diameter: proof of Theorem~\ref{t:upperbounddiam}}\label{s:upper}
%! TEX root= Diameter-Edge_step.tex

In this section we provide a proof for Theorem \ref{t:upperbounddiam}. The main idea is to use Lemmas~\ref{l:thestar} and~\ref{l:deglowerbound} to show that, with high probability, all vertices born up to time~$t^{\frac{1}{12}}$ are in a connected component with diameter~$2$. We then use a first moment estimate (Lemma~\ref{l:pathfirstmoment2} below) to show that the lengths of the paths formed by newer vertices have the desired upper bound. As in the previous section, we will state only the essential lemma needed for the proof of the main result and postpone its proof to the end of this section.

Given~$k\in\N$ and $k$ time steps~$s_1,\dots,s_k\in\N$ such that~$s_1<\dots< s_k$, we say that the vector of time steps~$\vec{s}=(s_1,\dots,s_k)$ is a \emph{vertex path} if in the process~$\{G_t(f)\}_{t\geq 1}$, at each time~$s_j$, for~$j=2,\dots,k$, a vertex is born and makes its first (vertex-step) connection to the vertex born at time~$s_{j-1}$. We denote by
\[
\{s_1\leftarrow s_2\leftarrow\dots\leftarrow s_k        \}
\]
the event where~$\vec{s}$ is a vertex path.  Moreover, given~$k,t_0,t\in\N$, denote by~$\mathcal{V}_{k,t_0}(t)$ the set of all vertex-paths of length~$k$ whose vertices were born between times~$t_0$ and~$t$. 

The lemma below gives an upper bound to the average cardinality of ~$\mathcal{V}_{k,t_0}(t)$ in terms of $k, t_0$ and $t$.

\begin{lemma}\label{l:pathfirstmoment2}
	Using the notation above defined, we have, for~$k,t_0,t\in\N$,
	\begin{equation}\label{eq:pathfirstmoment5}
	\Ed\left[ |\mathcal{V}_{k,t_0}(t)|\right] \leq C_1\exp\left\{  2\log t  -(k-2)\left(\log(k-2)  +C_2 +\log\left(\sum_{j=t_0}^{t}\frac{f(j)}{j-1} \right) \right) \right\}.
	\end{equation}
\end{lemma}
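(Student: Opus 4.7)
The plan is to combine Lemma~\ref{l:pathfirstmoment} with a routine over-counting argument and Stirling's formula. First I would write
\[
\Ed\left[ |\mathcal{V}_{k,t_0}(t)|\right] = \sum_{t_0 \le s_1 < \dots < s_k \le t} \Pd(s_1 \leftarrow \dots \leftarrow s_k),
\]
apply the bound from Lemma~\ref{l:pathfirstmoment} to each summand, and separate the boundary factor $f(s_1)\,\frac{s_k-1}{s_1+1}$ from the product $\prod_{m=2}^k \frac{f(s_m)}{2(s_m-1)}$. Using $f(s_1)\le 1$, $s_k-1\le t$ and $s_1+1\ge 2$, the boundary factor is controlled by $t/2$, and summing freely over $s_1\in[t_0,t]$ costs at most another factor of $t$.

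Next I would handle the remaining ordered sum by dropping the strict-increase constraint: since each strictly increasing $(k-1)$-tuple appears exactly $(k-1)!$ times in the unordered sum (and repeated tuples only make the unordered sum larger), I get
\[
\sum_{t_0\le s_2<\dots<s_k\le t}\;\prod_{m=2}^k \frac{f(s_m)}{2(s_m-1)} \;\le\; \frac{1}{(k-1)!}\left(\sum_{j=t_0}^t \frac{f(j)}{2(j-1)}\right)^{k-1}.
\]
Putting the three steps together yields
\[
\Ed\left[ |\mathcal{V}_{k,t_0}(t)|\right] \;\le\; \frac{t^2}{2\,(k-1)!}\left(\sum_{j=t_0}^t \frac{f(j)}{2(j-1)}\right)^{k-1}.
\]

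Finally, I would invoke Stirling's bound $(k-1)! \ge c\,(k-1)^{k-1} e^{-(k-1)}$ and take logarithms. The $1/(k-1)!$ produces the $-(k-2)\log(k-2)$ term (after absorbing the discrepancy between $k-1$ and $k-2$ into the overall prefactor $C_1$), the $e^{k-1}$ together with the $2^{k-1}$ coming from the product contribute the additive constant $-(k-2)C_2$, the $t^2$ gives the $2\log t$, and the $(k-1)$-th power of the sum supplies the $\log\bigl(\sum_{j}f(j)/(j-1)\bigr)$ factor (up to the harmless shift from the $2(j-1)$ in the denominator, again absorbed into $C_2$). There is no real obstacle: the whole argument is bookkeeping once Lemma~\ref{l:pathfirstmoment} is in hand, with the only delicate point being the standard Stirling manipulation that converts a factorial into an exponential of the form $(k-2)\log(k-2)$.
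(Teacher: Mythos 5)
Your approach is the same as the paper's — apply Lemma~\ref{l:pathfirstmoment}, over-count the interior sum to produce a factorial denominator, and finish with Stirling — but there is a genuine bookkeeping gap which you have glossed over with ``no real obstacle.'' By crudely bounding $f(s_1)\frac{s_k-1}{s_1+1}\le t/2$ and then summing over $s_1$ you leave the $m=k$ factor $\frac{f(s_k)}{2(s_k-1)}$ inside the product, so that the interior over-counted sum runs over $k-1$ indices. This yields a $(k-1)!$ and a $(k-1)$-th power of $\sum_{j=t_0}^t f(j)/(j-1)$, one power too many. You then claim the discrepancy between $(k-1)$ and $(k-2)$ ``is absorbed into the overall prefactor $C_1$.'' That works for the factorial pieces (replacing $(k-1)\log(k-1)$ by $(k-2)\log(k-2)$ only makes your bound stronger), but it does \emph{not} work for the extra power of the sum: that is a full multiplicative factor of $\sum_{j=t_0}^t f(j)/(j-1)$, which is \emph{not} a constant and can be as large as $\log t$ (take $f\equiv 1$, $t_0=1$). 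Concretely, for $k=3$ your chain gives the upper bound $\tfrac{t^2}{16}\bigl(\sum_j f(j)/(j-1)\bigr)^2$, whereas the lemma asserts a bound of the shape $C_1 e^{-C_2}\, t^2\,\sum_j f(j)/(j-1)$; no choice of $C_1,C_2$ independent of $t,t_0$ makes the former $\le$ the latter when the sum is large. The fix is exactly what the paper does: before over-counting, use the cancellation
\[
\frac{s_k-1}{s_1+1}\cdot\frac{f(s_k)}{2(s_k-1)}=\frac{f(s_k)}{2(s_1+1)},
\]
which pulls $s_k$ out of the product and reduces the interior sum to the $k-2$ indices $s_2,\dots,s_{k-1}$; the boundary variables $s_1,s_k$ then contribute at most a factor $t^2$.

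Two further remarks. First, your Stirling step (once the $(k-1)$-vs-$(k-2)$ issue is repaired) produces a $-\log\bigl(\sum_{j=t_0}^t f(j)/(j-1)\bigr)$ inside the parenthesis, i.e.\ $+(k-2)\log(\sum_j f(j)/(j-1))$ in the exponent; this matches the paper's own proof (and its later use in the diameter upper bound), so the $+\log$ appearing in the displayed lemma statement should be read as a sign typo. Second, note that the constant $C_2$ you obtain from $e^{k-1}/2^{k-1}$ is roughly $\log 2 - 1<0$; this is harmless since the lemma only asserts existence of real constants, but worth keeping in mind when quoting the inequality.
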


Next we show how the above lemma and estimates on vertices' degree implies Theorem~\ref{t:upperbounddiam}.
\begin{proof}[Proof of Theorem~\ref{t:upperbounddiam}] We prove part (a) first.
	
	\noindent \underline{Proof of part (a):} Recall that, in this part of the theorem, $f$ is under condition (\ref{def:conditionS}), which holds if $\sum_{s=2}^{\infty}f(s)/s$ is finite.
	
	 For~$t_0,t\in\N$ and~$\delta\in(0,1)$, let~$A_\delta(t_0,t)$ be the event where every vertex born before time~$t_0$ has degree at least~$t^{\delta}$ in~$G_t(f)$. For~$\varepsilon\in(0,1)$, let~$B_\varepsilon(t)$ be the event where there exists a vertex~$v$ in~$V(G_t(f))$ such that~$D_t(v)\geq t^{1-\e}$. Then, by Lemmas~\ref{l:thestar} and~\ref{l:deglowerbound}, we have
	\begin{equation}
	\label{eq:etdt}
	\Pd\left(  A_{\frac{1}{15}}(t^{\frac{1}{12}},t) \right) = 1-o(1),\quad \Pd\left(  B_{\frac{1}{30}}(t) \right)= 1-o(1).
	\end{equation}
	We have, on the event where two vertices~$u_1,u_2\in V(G_t(f))$ are such that~$D_t(u_1)\geq t^{15^{-1}}$ and~$D_t(u_2)\geq t^{1-30^{-1}}$,
	\begin{equation}\label{ineq:uvconnected}
	\begin{split}
	\Pd\left(  u_1 \nleftrightarrow u_2 \text{ in  }G_{2t}(f)     \right) & \le
	\prod_{s=t+1}^{2t}\left(  1-   \frac{(1-f(s))t^{15^{-1}}t^{1-30^{-1}}}{2(s-1)^{2}}     \right)
	\\
	&\leq\exp\left\{            -\frac{t^{15^{-1}}t^{1-30^{-1}}}{16t^2}     \cdot t      \right\}
	\\
	&=
	\exp\left\{            -\frac{t^{-30^{-1}}}{16}         \right\}.
	\end{split}
	\end{equation}
	
	Recall the notation~$v_i$ symbolizing the vertex born at time~$i\in\N$. Together with~\eqref{eq:etdt} and the union bound, the above equation implies
	\begin{equation}
	\nonumber
	\begin{split}
	\lefteqn{\Pd\left(  \exists i,j\in\N, \text{ such that }1\leq i < j \leq t^{12^{-1}},Z_i=Z_j=1, \text{ and } \dist(v_i,v_j)>2 \text{ in }G_{2t}(f)   \right)}\phantom{**********}
	\\
	&\leq
	o(1)+\sum_{1\leq i<j\leq t^{\frac{1}{12}}}\Pd\left( 
	A_{\frac{1}{15}}(t^{\frac{1}{12}},t), B_{\frac{1}{30}}(t)
	, \dist(v_i,v_j)>2 \text{ in }G_{2t}(f)   \right)
	\\
	&\leq
	o(1)+t^{-6^{-1}}\exp\left\{            -\frac{t^{-30^{-1}}}{16}         \right\}
	\\
	&= o(1).
	\end{split}
	\end{equation}
	This implies the existence of a constant $C_1>0$ such that the probability of there existing two vertices born before time~$C_1  t^{12^{-1}}$ such that the distance between said vertices is larger than~$2$ in~$G_t(f)$ is polynomially small in~$t$. We now turn our attention to vertices born after~$t^{\frac{1}{13}}$. We will use Lemma~\ref{l:pathfirstmoment2} in order to bound the probability of there existing long vertex-paths formed by vertices born after~$t^{\frac{1}{13}}$, the notion of a ``long'' path being~$f$-dependent. Let~$V_t(t^{\frac{1}{13}})$ denote the set of all vertices of~$V(G_t(f))$ born before time~$t^{\frac{1}{13}}$, let~${\bf d}_{\mathrm{max}}(t^{\frac{1}{13}},t)$ be the length of a maximal vertex-path of vertices born between times~$t^{\frac{1}{13}}$ and~$t$. Let~$u_1,u_2\in V(G_t(f))$. Since~$G_t(f)$ is connected,
	\begin{equation}
	\label{eq:diameter}
	\begin{split}
	\dist(u_1,u_2)&\leq  \dist(u_1,V_t(t^{\frac{1}{13}}))+ \diam(V_t(t^{\frac{1}{13}}))+ \dist(u_2,V_t(t^{\frac{1}{13}}))
	\\
	&\leq 2 {\bf d}_{\mathrm{max}}(t^{\frac{1}{13}},t) +  \diam(V_t(t^{\frac{1}{13}})).
	\end{split}
	\end{equation}
	But we know that~$ \diam(V_t(t^{\frac{1}{13}}))\leq 2$ with high probability. Bounding~${\bf d}_{\mathrm{max}}(t^{\frac{1}{13}},t) $ then gives us an a.a.s. upper bound for the diameter of~$G_t(f)$.
	
	Now, given~$t\in\N$, if
	\begin{equation}
	k\geq 3   \left(     \frac{\log t}{-\log\left(\sum_{s=t^{\frac{1}{13}}}^t \frac{f(s)}{s-1}  \right) }       \wedge    \frac{\log t}{\log\log t}                        \right),
	\end{equation}
	then, by Lemma~\ref{l:pathfirstmoment2}, and since~$\log   \left(   \sum_{j=t^{\frac{1}{13}}}^{t}\frac{f(j)}{j-1} \right)      $ is eventually negative for large~$t$ (recall again that~$f$ satisfies (\ref{def:conditionS})), we have,
	\begin{equation}
	\label{eq:upperdiamsum}
	\begin{split}
	\lefteqn{\Pd\left(      {\bf d}_{\mathrm{max}}(t^{\frac{1}{13}},t) > k                 \right)}
	\phantom{****}
	\\
	&\leq
	\Ed\left[ |\mathcal{V}_{k,t^{1/13}}(t)| \right] 
	\\
	&\leq 
	C_1\exp\left\{  2\log t  -(k-2)\left(\log(k-2)  +C_2  -\log\left(       \sum_{j=t^{\frac{1}{13}}}^{t}\frac{f(j)}{j-1} \right)        \right)                              \right\}
	\\
	&\leq C t^{-\frac{1}{2}}.
	\end{split}
	\end{equation}
	The above upper bound and (\ref{eq:diameter}) proves part (a).
		
	\noindent \underline{Proof of part (b):} Recall that in this part, we are under condition (\ref{def:conditionL}), which holds if, for~$\kappa\in(0,1)$ and every~$t\in\N$ sufficiently large, one has
	\begin{equation}
	\label{eq:diamcond2}
	\sum_{s=t^{1/13}}^{t}\frac{f(s)}{s}  < (\log t)^{\kappa}.
	\end{equation}
	Then, let $k$ be so that
	\[
	k \geq \frac{3 }{1-\kappa}    \frac{\log t}{\log\log t}                 .     
	\]
	We then obtain, again by Lemma~\ref{l:pathfirstmoment2}, for sufficiently large~$t$,
	\begin{equation}
	\label{eq:upperdiamlogk}
	\begin{split}
	\lefteqn{\Pd\left(      {\bf d}_{\mathrm{max}}(t^{\frac{1}{13}},t) > k                 \right)}\quad\quad
	\\
	&\leq
	\Ed\left[ |\mathcal{V}_{k,t^{1/13}}(t)| \right] 
	\\
	&\leq 
	C_1\exp\left\{  2\log t  -(k-2)\left(\log(k-2)  +C_2  -\log\left(       \sum_{j=t^{\frac{1}{13}}}^{t}\frac{f(j)}{j-1} \right)        \right)                              \right\}
	\\
	&\leq
	C_1\exp\left\{  2\log t  -\frac{3}{1-\kappa}\frac{\log t}{\log\log t}        \left(    (1-\kappa)\log \log t    - C - \log\log\log t  \right)                              \right\}
	\\
	&\leq C t^{-\frac{1}{2}}.
	\end{split}
	\end{equation}
	Finally, the above upper bound together with~\eqref{eq:diameter} finishes the proof of the Theorem.
\end{proof}
Before proving Lemma \ref{l:pathfirstmoment2}, we will need an intermediate result.
\begin{lemma}
	\label{l:pathfirstmoment}
	Using the notation above, we have, for each vector~$\vec{s}=(s_1,\dots,s_k)$ and step function~$f$,
	\begin{eqnarray}
	\label{eq:pathfirstmoment}
	\Pd \left( s_1\leftarrow s_2\leftarrow\dots\leftarrow s_k   \right) \leq
	f(s_1)\frac{s_k-1}{s_1+1}\prod_{m=2}^k\frac{f(s_m)}{2(s_{m}-1)}.
	\end{eqnarray}
\end{lemma}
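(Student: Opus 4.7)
The plan is induction on $k$, peeling off one time-step at a time using the Markov property. On the event $\{s_1\leftarrow\cdots\leftarrow s_{k-1}\}$ the vertex $v_{s_{k-1}}$ belongs to the graph, so the preferential-attachment rule yields the exact conditional probability
\[
\Pd\bigl(Z_{s_k}=1,\; v_{s_k}\to v_{s_{k-1}} \,\big|\,\mathcal{F}_{s_k-1}\bigr)=f(s_k)\,\frac{D_{s_k-1}(v_{s_{k-1}})}{2(s_k-1)},
\]
and an application of the tower rule converts this into
\[
\Pd(s_1\leftarrow\cdots\leftarrow s_k)=\frac{f(s_k)}{2(s_k-1)}\,\Ed\bigl[\mathbb{1}\{s_1\leftarrow\cdots\leftarrow s_{k-1}\}\,D_{s_k-1}(v_{s_{k-1}})\bigr].
\]

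The second step is to control the inner expected degree on the event in question. Since there $v_{s_{k-1}}$ is freshly born at time $s_{k-1}$ via a vertex-step, one has $D_{s_{k-1}}(v_{s_{k-1}})=1$; conditioning further on $\mathcal{F}_{s_{k-1}}$ and invoking the explicit product identity~\eqref{eq:degexpec} gives
\[
\Ed\bigl[D_{s_k-1}(v_{s_{k-1}})\,\big|\,\mathcal{F}_{s_{k-1}}\bigr]=\prod_{j=s_{k-1}}^{s_k-2}\Bigl(1+\tfrac{1}{j}-\tfrac{f(j+1)}{2j}\Bigr)\;\leq\;\frac{s_k-1}{s_{k-1}+1},
\]
where the sharp estimate requires using the nonnegative correction $-f(j+1)/(2j)$ at the initial step of the product rather than simply dropping it. Substituting this back yields the one-step recursion
\[
\Pd(s_1\leftarrow\cdots\leftarrow s_k)\le\frac{f(s_k)}{2(s_{k-1}+1)}\,\Pd(s_1\leftarrow\cdots\leftarrow s_{k-1}),
\]
which combined with the base case $\Pd(Z_{s_1}=1)=f(s_1)$ telescopes to the claimed bound after a routine rearrangement of the $(s_m-1)$ factors across successive iterations.

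The main technical obstacle I anticipate is precisely the sharp one-step estimate with denominator $s_{k-1}+1$ rather than the immediate $s_{k-1}$ that follows from the crude bound $1+1/j-f(j+1)/(2j)\le 1+1/j$; reclaiming the factor $(s_{k-1}+1)/s_{k-1}$ via a careful treatment of the leading term of the product is what ultimately produces the denominator $s_1+1$ in the final expression. Aside from this calculus-level sharpening, the rest of the argument is a mechanical induction/telescoping relying only on the preferential-attachment identities~\eqref{eq:degvconde} and~\eqref{eq:degexpec}.
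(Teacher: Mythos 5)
Your overall architecture is the paper's own: peel off the last step using the conditional birth-and-connection probability, observe that $D_{s_k-1}(v_{s_{k-1}})$ is (conditionally on the vertex being born) independent of the earlier history, plug in the product identity~\eqref{eq:degexpec}, and telescope. So the route is the same, and the main structural steps are fine.

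The step that fails is exactly the one you flagged as the ``main technical obstacle'': the claimed inequality
\[
\prod_{j=s_{k-1}}^{s_k-2}\Bigl(1+\tfrac{1}{j}-\tfrac{f(j+1)}{2j}\Bigr)\;\leq\;\frac{s_k-1}{s_{k-1}+1}
\]
is false for an arbitrary edge-step function, and in fact the $-f(j+1)/(2j)$ correction can \emph{never} be large enough to reclaim the extra factor. The leading factor satisfies
\[
1+\frac{1}{s_{k-1}}-\frac{f(s_{k-1}+1)}{2s_{k-1}}\;\geq\;1+\frac{1}{2s_{k-1}}\;>\;1,
\]
because $\|f\|_\infty\leq 1$, so the product strictly exceeds $\prod_{j=s_{k-1}+1}^{s_k-2}(1+1/j)=\frac{s_k-1}{s_{k-1}+1}$ whenever $s_k>s_{k-1}+1$. (A concrete counterexample to the lemma as literally stated: take $f(s_1)=f(s_k)=1$ and $f\equiv 0$ in between; then $\Pd(s_1\leftarrow s_k)=\frac{1}{2(s_k-1)}\cdot\frac{s_k-1}{s_1}$, which is strictly larger than $\frac{s_k-1}{s_1+1}\cdot\frac{1}{2(s_k-1)}$.) What your recursion actually yields, after the correct one-step bound $\Ed[D_{s_k-1}(v_{s_{k-1}})\mid\mathcal F_{s_{k-1}}]\leq\frac{s_k-1}{s_{k-1}}$, is the denominator $s_1$ rather than $s_1+1$. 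This is the same slack hiding in the paper's inequality $\exp\{\sum_{m=s_1}^{s_k-2}(m^{-1}-f(m+1)/(2m))\}\leq\frac{s_k-1}{s_1+1}$, which also does not hold uniformly in $f$; the constant should be read as $\frac{s_k-1}{s_1}$, which is harmless for every downstream application since the lemma is only invoked with $s_1\geq t^{1/13}$. So: drop the attempt to squeeze out the extra $+1$, state and prove the bound with denominator $s_1$, and the rest of your telescoping (including the observation that $\prod_{m=2}^k\frac{s_m-1}{s_{m-1}}\leq\frac{s_k-1}{s_1}$) goes through cleanly.
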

\begin{proof}
	Consider the events~$\{         s_1\leftarrow s_2\leftarrow\dots\leftarrow s_{k-1}    \}$ and~$\{    s_{k-1}\leftarrow  s_k   \}$, defined analogously as the event in the above equation, but for the vectors~$(s_1,\dots,s_{k-1})$ and~$(s_{k-1},s_k)$ respectively. We have
	\begin{equation}
	\label{eq:pathfirstmoment2}
	\begin{split}
	\Pd \left( s_1\leftarrow s_2\leftarrow\dots\leftarrow s_k   \right) 
	&=
	\Ed \left[ \mathbb{1}\{s_1\leftarrow s_2\leftarrow\dots\leftarrow s_{k-1}\}   \Pd\left( s_{k-1} \leftarrow s_k   \middle | \mathcal{F}_{s_k  -1}\right)\right]
	\\
	&=
	\Ed \left[ \mathbb{1}\{s_1\leftarrow s_2\leftarrow\dots\leftarrow s_{k-1}\}   f(s_k) \cdot \frac{D_{s_k-1}(v_{s_{k-1}})}{2(s_k-1)}\right].
	\end{split}
	\end{equation}
	But, crucially, conditioned on the event where a vertex is born at time~$s_{k-1}$, the degree of said vertex at time~$s_k-1$ depends only on the connections made after time~$s_{k-1}$, and is therefore independent of the indicator function above. We then obtain, by~\eqref{eq:degexpec},
	\begin{equation}
	\label{eq:pathfirstmoment3}
	\begin{split}
	\lefteqn{\Pd \left( s_1\leftarrow s_2\leftarrow\dots\leftarrow s_k   \right) }\quad
	\\
	&=
	\Ed \left[ \mathbb{1}\{s_1\leftarrow s_2\leftarrow\dots\leftarrow s_{k-1}\}   f(s_k) \cdot \frac{D_{s_k-1}(v_{s_{k-1}})}{2(s_k-1)}\middle | Z_{s_{k-1}}=1\right]f(s_{k-1})
	\\
	&=f(s_{k-1})f(s_k)\Pd \left(s_1\leftarrow s_2\leftarrow\dots\leftarrow s_{k-1}   \middle | Z_{s_{k-1}}=1\right) 
	\Ed \left[ \frac{D_{s_k-1}(v_{s_{k-1}})}{2(s_k-1)}\middle | Z_{s_{k-1}}=1\right]
	\\
	&=
	\Pd \left(s_1\leftarrow s_2\leftarrow\dots\leftarrow s_{k-1} \right)\frac{f(s_k)}{2(s_k-1)}\prod_{m=s_{k-1} }^{s_k-2}\left( 1+\frac{1}{m}-\frac{f(m+1)}{2m} \right)
	\\
	&\leq
	\Pd \left(s_1\leftarrow s_2\leftarrow\dots\leftarrow s_{k-1} \right)\frac{f(s_k)}{2(s_k-1)}\exp\left\{   \sum_{m=s_{k-1}}^{s_k-2} \left(\frac{1}{m}-\frac{f(m+1)}{2m}\right)   \right\},
	\end{split}
	\end{equation}
	by elementary properties of the exponential function. Iterating the above argument and recalling that the vertex~$s_1$ is born with probability~$f(s_1)$, we obtain
	\begin{equation}
	\label{eq:pathfirstmoment4}
	\begin{split}
	\Pd \left( s_1\leftarrow s_2\leftarrow\dots\leftarrow s_k   \right)
	&\leq 
	f(s_1)\exp\left\{   \sum_{m=s_{1}}^{s_k-2} \left(\frac{1}{m}-\frac{f(m+1)}{2m}\right)   \right\}\prod_{m=2}^k\frac{f(s_m)}{2(s_{m}-1)}
	\\
	&\leq 
	f(s_1)\frac{s_k-1}{s_1+1}\prod_{m=2}^k\frac{f(s_m)}{2(s_{m}-1)},
	\end{split}
	\end{equation}
	finishing the proof of the lemma.
\end{proof}
We are finally able to prove Lemma \ref{l:pathfirstmoment2}.
\begin{proof}[Proof of Lemma \ref{l:pathfirstmoment2}]
	We will use Lemma~\ref{l:pathfirstmoment} and then an application of the union bound. First, fix~$s_1,s_k\in\N$ such that~$t_0\leq s_1<s_k \leq t$. We have, by Stirling's approximation formula and the positivity of the terms involved,
	\begin{equation}
	\label{eq:pathfirstmoment6}
	\begin{split}
	\sum_{\substack{ s_2,s_3,\dots,s_{k-1} \\ s_1<s_2<\dots<s_{k-1}<s_k}}
	\prod_{m=2}^{k-1}\frac{f(s_m)}{s_m-1} & \le
	\frac{1}{(k-2)!}\left(    \sum_{j=t_0}^{t}\frac{f(j)}{j-1}    \right)^{k-2}
	\\
	%	&\leq
	%	C\exp\left\{    -(k-2)\log(k-2)  +(k-2)  +(k-2)\log\left(       \sum_{j=t_0}^{t}\frac{f(j)}{j-1}        \right)                              \right\}
	%	\\
	& \le 
	C\exp\left\{    -(k-2)\left(\log(k-2)  -1  -\log\left(       \sum_{j=t_0}^{t}\frac{f(j)}{j-1} \right)        \right)                              \right\}.
	\end{split}
	\end{equation}
	We can then show, by the above equation, Lemma~\ref{l:pathfirstmoment}, and the union bound,
	\begin{equation}
	\label{eq:pathfirstmoment7}
	\begin{split}
	\lefteqn{\Ed\left[ |\mathcal{V}_{k,t_0}(t)| \right] }
	\phantom{*}
	\\
	&\le 
	\sum_{\substack{ s_1,\dots,s_{k} \\ t_0\leq s_1<\dots<s_k\leq t}}
	\Pd \left( s_1\leftarrow s_2\leftarrow\dots\leftarrow s_k   \right) 
	\\
	&\leq
	\sum_{\substack{ s_1,\dots,s_{k} \\ t_0\leq s_1<\dots<s_k\leq t}}
	f(s_1)\frac{s_k-1}{s_1+1}\prod_{m=2}^k\frac{f(s_m)}{2(s_{m}-1)}
	\\
	&=
	\frac{1}{2^{k-1}}\sum_{\substack{ s_1,s_k \\ t_0\leq s_1<s_k\leq t}}\frac{f(s_1)f(s_k)}{s_1+1}
	\sum_{\substack{ s_2,\dots,s_{k-1} \\ s_1<s_2<\dots<s_{k-1}<s_k}}\prod_{m=2}^{k-1}\frac{f(s_m)}{s_{m}-1}
	\\
	&\leq
	C\exp\left\{    -(k-2)\left(\log(k-2)  +C_2  -\log\left(       \sum_{j=t_0}^{t}\frac{f(j)}{j-1} \right)        \right)                              \right\} \sum_{\substack{ s_1,s_k \\ t_0\leq s_1<s_k\leq t}}\frac{f(s_1)f(s_k)}{s_1+1}
	\\
	&\leq
	C\exp\left\{  2\log t  -(k-2)\left(\log(k-2)  +C_2  -\log\left(       \sum_{j=t_0}^{t}\frac{f(j)}{j-1} \right)        \right)                              \right\},
	\end{split}
	\end{equation}
	thus concluding the proof of the result.
\end{proof}
\section{The family of regularly varying functions: sharp bounds for the diameter}\label{s:rv}
%! TEX root = Diameter-Edge_step.tex

In this section we explore our results when more information on $f$ is provided. In particular, we assume that $f$ satisfies condition (\ref{def:conditionRV}) for $\gamma \in (0,1)$ and prove sharp bounds for the diameter. Recall that this condition holds whenever
\begin{equation*}
\exists \gamma, \text{ such that }\forall a>0,\; \lim_{t \to \infty} \frac{f(at)}{f(t)} = \frac{1}{a^{\gamma}}.
\end{equation*}
Also recall that when $f$ satisfies the above condition for $\gamma >0$ we say it is a regular varying function with index of regular variation $-\gamma$. The case $\gamma =0$ is said to be slowly varying.

To prove the results under the assumption of regular variation our arguments rely on the theorems from Karamata's theory. In particular, the \textit{Representation Theorem} (Theorem 1.4.1 of \cite{BGT89Book}) and \textit{Karamata's theorem} (Proposition 1.5.8 of \cite{BGT89Book}). The former states that if~$f$ is a regularly varying function with index $-\gamma$, then there exists a slowly varying function~$\ell$ such that $f$ is of the form
\begin{equation}\label{eq:repthm}
f(t) = \frac{\ell(t)}{t^{\gamma}}.
\end{equation}
Whereas, the latter states that if~$\ell$ is a slowly varying function, then, for any $a < 1$
\begin{equation}\label{eq:karamata}
\int_{1}^{t}\frac{\ell(x)}{x^a}\mathrm{d}x \sim \frac{\ell(t)t^{1-a}}{1-a},
\end{equation}
and for~$a>1$, we have
\begin{equation}\label{eq:karamata2}
\int_{t}^{\infty}\frac{\ell(x)}{x^a}\mathrm{d}x \sim \frac{\ell(t)}{(a-1)t^{a-1}}.
\end{equation}
Our proofs will also rely on another result from Karamata's Theory (Corollary A.3 of \cite{ARSEdge17}) which assures that for all $\varepsilon >0$,
\begin{equation}\label{eq:limfeps}
\frac{f(t)}{t^{\varepsilon}} \stackrel{t\to \infty }{\longrightarrow}0.
\end{equation}
The next step is to prove the constant order of the diameter of the graphs generated by regularly varying functions and how it depends on the index of regular variation on infinity. This result is stated on Theorem~\ref{t:rv} and we provide a proof for it below.
\begin{proof}[Proof of Theorem~\ref{t:rv}] We begin proving the lower bound.

\noindent \underline{Lower bound:} By~(\ref{eq:repthm}) we have that there exists a slowly varying function $\ell$ such that $f(t) = \ell(t)/t^{\gamma}$. Moreover, (\ref{eq:limfeps}) implies that
\begin{equation}
\frac{\log \ell (t)}{\log t} \to 0.
\end{equation}
Thus, we have that
\begin{equation}
-\frac{\log t}{\log f(t)} = \frac{\log t}{\gamma\log(t) + \log \ell (t) } =\frac{1-o(1)}{\gamma}.
\end{equation}
Applying Theorem~\ref{t:lowerbounddiam} gives us the desired lower bound.

\noindent \underline{Upper bound:} By the Representation Theorem and (\ref{eq:limfeps}), we have that $f$ also satisfies condition (\ref{def:conditionS}). Just notice that
\begin{equation}
\sum_{s=1}^{\infty} \frac{f(s)}{s} = \sum_{s=1}^{\infty} \frac{\ell(s)}{s^{1+\gamma}} < \infty.
\end{equation}
And by Karamata's Theorem (Equation~\eqref{eq:karamata2} in particular) we have that
\begin{equation}
 \sum_{s=t^{\frac{1}{13}}}^{t} \frac{\ell(s)}{s^{1+\gamma}} \sim \frac{\ell(t)}{t^{\gamma\frac{\gamma}{13}}} \implies  - \frac{\log t}{\log \left( \sum_{s=t^{\frac{1}{13}}}^{t} \frac{\ell(s)}{s^{1+\gamma}}\right)} \sim \frac{\log t}{\frac{\gamma}{13}\log t + \log \ell(t)} = \frac{13(1-o(1))}{\gamma}
\end{equation}
And finally, applying Theorem~\ref{t:upperbounddiam} we prove the result.
\end{proof}

\section{Final comments}\label{s:finalcomments}
%! TEX root = Diameter-Edge_step.tex
We end this paper with a brief discussion on the affine version of our model and how dropping some regularity conditions on $f$ may produce a sequence of graphs $\{G_t\}_{t \in \N}$ that has a subsequence which is essentially of complete graphs and another one which is close to the BA-model.

\subsection*{Affine version} At the introduction, we have discussed the affine version of the PA-rule, which we recall below.
\[
\Pd \left( v_{t+1} \rightarrow u \middle | G_t \right) = \frac{\text{degree}(u) + \d}{\sum_{w \in G_t}(\text{degree} (w) + \d)}.
\]
In \cite{ARSEdge17}, the authors showed that the effect of the affine term $\delta$ vanishes in the long run when one is dealing with the empirical degree distribution. I.e., their results show that $\delta$ has no effect on the degree sequence of the graphs, which is not observed in the affine version of the BA-model, for which the exponent of the power-law distribution depends on $\delta$, see \cite{CF03}. However, we believe $\delta$ may have an increasing/decreasing effect on the diameter's order. One also may find interesting to consider $\delta = \delta(t)$ and investigate which influence takes over: is it the edge-step function or the affine term?

\subsection*{Dropping regularity conditions}  In this part we discuss an example which hints that dropping some assumptions on $f$ may produce a somewhat pathological sequence of graphs. For instance, if we drop the assumption of $f$ being non-increasing, we may obtain a sequence of graphs whose diameter sequence oscillates between $1$ and $\log t$. More generally, the sequence of graphs oscillates between graphs similar to the BA-random tree, $\{\mathrm{BA}_t\}_{t \in \N}$, and graphs close to complete graphs.

Let $(t_k)_{k \in \N}$ be the following sequence: $t_0 = 1$ and $t_{k+1} := \exp\{t_k\}$, for $k>1$. Now, let $h$ be the edge-step function defined as follows
\begin{equation}
h(t) = \begin{cases}
1 & \quad \text{ if }t \in [t_{2k}, t_{2k+1}], \\
0 & \quad \text{ if }t \in (t_{2k+1}, t_{2k+2}).
\end{cases}
\end{equation}
The idea behind such $h$ is that between times $[t_{2k}, t_{2k+1}]$ the process behaves essentially as the traditional BA-model, whereas at interval $(t_{2k+1}, t_{2k+2})$ the process ``messes things up" connecting almost all vertices by only adding new edges. Moreover, in both regimes the process has time enough to ``forget about what was built in the past".

Using Lemma~\ref{l:deglowerbound} and reasoning as in (\ref{ineq:uvconnected}), one may prove that
\[
\diam_{G_{t_{2k+1}^{13}}(h)}G_{t_{2k+1}}(h) \le 2, \text{ a.a.s.}
\]
However, the process does not add any new vertex in the interval $(t_{2k+1}, t_{2k+2})$. Therefore,~$\diam G_{t_{2k+2}}(h) \le 2$, a.a.s.

On the other hand, we may collapse all vertices of $G_{t_{2k+2}}(h)$ into a single super vertex with $t_{2k+2}$ loops losing just one unit in its diameter. This way $\{G_{s}(h)\}_{t_{2k+2}}^{t_{2k+3}}$ is distributed as the BA-random tree started from the super vertex. Thus (see Theorem~$7.1$ of \cite{HBook2}), 
$$\diam G_{t_{2k+3}}(h) \approx  \log t_{2k+3} = t_{2k+2}.$$

Roughly speaking, $\{\diam G_t(h)\}_{t\geq 1}$ has a subsequence that is bounded by $2$ and another one that grows logarithmic in time. The conclusion is that if we drop some monoticity assumption on $f$, we may obtain a sequence of graphs having at least two subsequences that are completely different as graphs.

{\bf Acknowledgements } 
We are thankful to Roberto Imbuzeiro Oliveira for the many inspiring and pleasant conversations. \textit{C.A.}  was partially supported by Funda\c{c}\~{a}o de Amparo \`{a} Pesquisa do Estado de S\~{a}o Paulo (FAPESP), grants 2013/24928-2 and 2015/18930-0, and  by the DFG grant
SA 3465/1-1.  \textit{R.R.} was partially supported by Conselho Nacional de Desenvolvimento Cient\'\i fico e Tecnol\'{o}gico (CNPq) and by the project Stochastic Models of Disordered and Complex Systems. The Stochastic Models of Disordered and Complex Systems is a Millennium Nucleus (NC120062) supported by the Millenium Scientific Initiative of the Ministry of Science and Technology  (Chile).  \textit{R.S.} has been partially supported by Conselho Nacional de Desenvolvimento Cient\'\i fico e Tecnol\'{o}gico (CNPq)  and by FAPEMIG (Programa Pesquisador Mineiro), grant PPM 00600/16. 

\bibliography{ref}

\begin{thebibliography}{10}

\bibitem{bastat}
R.~Albert and A.-L. Barabasi.
\newblock Statistical mechanics of complex networks.
\newblock {\em Rev. Modern Phys.}, 2002.

\bibitem{ARS17}
C.~Alves, R.~Ribeiro, and R.~Sanchis.
\newblock Large communities in a scale-free network.
\newblock {\em Journal of Statistical Physics}, 166(1):137--149, 2017.

\bibitem{ARSEdge17}
C.~Alves, R.~Ribeiro, and R.~Sanchis.
\newblock Preferential attachment random graphs with edge-step functions.
\newblock {\em Journal of Theoretical Probability}, 11 2019.

\bibitem{AF18}
M.~Amin~Abdullah and N.~Fountoulakis.
\newblock A phase transition in the evolution of bootstrap percolation
  processes on preferential attachment graphs.
\newblock {\em Random Structures \& Algorithms}, 52(3):379--418, 2018.

\bibitem{BM15}
A.~Backhausz and T.~F. M\'{o}ri.
\newblock Asymptotic properties of a random graph with duplications.
\newblock {\em J. Appl. Probab.}, 52(2):375--390, 06 2015.

\bibitem{BA99}
A-L. Barab\'{a}si and R.~Albert.
\newblock Emergence of scaling in random networks.
\newblock {\em Science}, 1999.

\bibitem{BGT89Book}
N.~H. Bingham, C.~M. Goldie, and J.~L. Teugels.
\newblock Regular variation, volume 27 of encyclopedia of mathematics and its
  applications, 1989.

\bibitem{bollrior}
B.~Bollob\'{a}s and O.~Riordan.
\newblock Mathematical results on scale-free random graphs. in handbook of
  graphs and networks: From the genome to the internet.
\newblock pages 1--34, 2003.

\bibitem{CD09}
S.~Chatterjee and R.~Durrett.
\newblock Contact processes on random graphs with power law degree
  distributions have critical value 0.
\newblock {\em Ann. Probab.}, 37(6):2332--2356, 11 2009.

\bibitem{CLBook}
F.~Chung and L.~Lu.
\newblock {\em Complex Graphs and Networks (Cbms Regional Conference Series in
  Mathematics)}.
\newblock American Mathematical Society, Boston, MA, USA, 2006.

\bibitem{CF03}
C.~Cooper and A.~Frieze.
\newblock A general model of web graphs.
\newblock {\em Random Structures \& Algorithms}, 22(3):311--335, 2003.

\bibitem{DO14}
S.~Dereich and M.~Ortgiese.
\newblock Robust analysis of preferential attachment models with fitness.
\newblock {\em Combinatorics, Probability and Computing}, 23(3):386–411,
  2014.

\bibitem{DHH10}
S.~Dommers, R.~van~der Hofstad, and G.~Hooghiemstra.
\newblock Diameters in preferential attachment models.
\newblock {\em Journal of Statistical Physics}, 139(1):72--107, 2010.

\bibitem{eggemann}
N.~Eggemann and S.D. Noble.
\newblock The clustering coefficient of a scale-free random graph.
\newblock {\em Discrete Applied Mathematics}, 159(10):953 -- 965, 2011.

\bibitem{ER59a}
P.~Erd\"{o}s and A.~R\'{e}nyi.
\newblock On random graphs i.
\newblock {\em Publicationes Mathematicae Debrecen}, 6:290, 1959.

\bibitem{FR07}
D.~Fernholz and V.~Ramachandran.
\newblock The diameter of sparse random graphs.
\newblock {\em Random Structures \& Algorithms}, 30(4):482--516, 2007.

\bibitem{JM15}
E.~Jacob and P.~M\"{o}rters.
\newblock Spatial preferential attachment networks: Power laws and clustering
  coefficients.
\newblock {\em Ann. Appl. Probab.}, 25(2):632--662, 04 2015.

\bibitem{JM17}
E.~Jacob and P.~M{\"o}rters.
\newblock The contact process on scale-free networks evolving by vertex
  updating.
\newblock {\em Royal Society Open Science}, 4(5):170081, 2017.

\bibitem{lyonyuval}
Russell Lyons and Yuval Peres.
\newblock {\em Probability on Trees and Networks}, volume~42 of {\em Cambridge
  Series in Statistical and Probabilistic Mathematics}.
\newblock Cambridge University Press, New York, 2016.
\newblock Available at \url{http://pages.iu.edu/~rdlyons/}.

\bibitem{MP14}
Y.~Malyshkin and E.~Paquette.
\newblock The power of choice combined with preferential attachement.
\newblock {\em Electronic Communications in Probability}, 19(44):1--13, 2014.

\bibitem{SW98}
S.H Strogatz and D.~J. Watts.
\newblock {Collective dynamics of 'small-world' networks}.
\newblock {\em Nature}, 393(6684):440--442, June 1998.

\bibitem{T15}
E.~Th\"{o}rnblad.
\newblock Asymptotic degree distribution of a duplication–deletion random
  graph model.
\newblock {\em Inter. Math.}, 11, 03 2014.

\bibitem{HBook2}
R.~Van Der~Hofstad.
\newblock Random graphs and complex networks volume 2.
\newblock {\em Available on http://www. win. tue. nl/~rhofstad/ NotesRGCNII.
  pdf}, 2018.

\bibitem{HHZ07}
D.~Znamenski, G.~Hooghiemstra, and R.~van~der Hofstad.
\newblock A phase transition for the diameter of the configuration model.
\newblock {\em Internet Mathematics}, 4(1):113--128, 2007.

\end{thebibliography}
\bibliographystyle{plain}

\end{document}